\definecolor{webgreen}{rgb}{0,.5,0}
\definecolor{webbrown}{rgb}{.6,0,0}
\DeclareMathOperator{\Li}{Li}
\DeclareMathOperator{\Cl}{Cl}
\begin{document}

\theoremstyle{plain}
\newtheorem{theorem}{Theorem}
\newtheorem{corollary}[theorem]{Corollary}
\newtheorem{proposition}{Proposition}
\newtheorem{lemma}{Lemma}
\newtheorem*{example}{Examples}
\newtheorem*{remark}{Remark}

\begin{center}
\vskip 1cm{\LARGE\bf 
Evaluation of harmonic number series involving the binomial coefficient $C(3n,n)$ in the denominator by integration \\
}
\vskip 1cm
{\large

Kunle Adegoke \\
Department of Physics and Engineering Physics \\ Obafemi Awolowo University, 220005 Ile-Ife \\ Nigeria \\
\href{mailto:adegoke00@gmail.com}{\tt adegoke00@gmail.com}

\vskip 0.7cm

Robert Frontczak \\
Independent Researcher, 72762 Reutlingen \\ Germany \\
\href{mailto:robert.frontczak@web.de}{\tt robert.frontczak@web.de}

}

\end{center}

\vskip .2 in

\begin{abstract}
Two classes of infinite series involving harmonic numbers and the binomial coefficient $C(3n,n)$ are evaluated in closed form using integrals. 
Several remarkable integral values and difficult series identities are stated as special cases of the main results. 
\end{abstract}

\noindent 2010 {\it Mathematics Subject Classification}: 11G55; 33B30; 65B10

\noindent \emph{Keywords:} Harmonic number; infinite series; integral.

\bigskip

\section{Introduction, Motivation and Preliminaries}

Integration is a classical tool in the evaluation of infinite series. The approach was revived recently 
in the papers by Sofo and Nimbran \cite{Sofo}, Stewart \cite{Stewart}, and Li and Chu \cite{Li1,Li2,Li3,Li4}.
For instance, Li and Chu review in \cite{Li4} a few (known) series involving harmonic numbers $H_n$ and odd harmonic numbers $O_n$  
such as
\begin{equation*}
\sum_{n=1}^\infty \frac{H_n}{n^2}, \qquad \sum_{n=1}^\infty (-1)^{n-1} \frac{H_n}{n^2}, \qquad \sum_{n=1}^\infty \frac{O_n}{n^2},
\qquad \sum_{n=1}^\infty (-1)^{n-1} \frac{O_n}{n^2},
\end{equation*}
obtained by using definite integrals. The authors evaluate difficult addition series involving alternating harmonic and odd harmonic numbers in closed form by employing calculus and complex analysis. Here, as usual, harmonic numbers $H_n$ and odd harmonic numbers $O_n$ are defined by $H_0=0$, $O_0=0$, and 
\begin{equation*}
H_n =\sum_{k=1}^n \frac{1}{k},\qquad O_n =\sum_{k=1}^n \frac{1}{2k-1}.
\end{equation*} 
Obvious relations between harmonic numbers $H_n$ and odd harmonic numbers $O_n$ are the following:
\begin{equation*}
H_{2n} = \frac{1}{2} H_n + O_n \qquad \text{and} \qquad H_{2n - 1} = \frac{1}{2}H_{n - 1} + O_n.
\end{equation*}

In this paper, we proceed in the same direction. Using integrals in combination with complex analysis and partial fraction decompositions 
we will evaluate in closed form the Euler-type series
\begin{equation*}
\sum_{k=0}^\infty \frac{H_{3k+1} - H_k}{(3k+1) \binom{3k}{k}} \binom{k}{m} z^k \qquad \text{and} \qquad  
\sum_{k=0}^\infty \frac{H_{2k} - H_k}{(3k+1) \binom{3k}{k}} \binom{k}{m} z^k,
\end{equation*}
for all $m\geq 0$ and all $z\in\mathbb{C}$ with $|z|<1$. For instance, we will prove that
\begin{equation*}
\sum_{k=0}^\infty \frac{H_{3k+1} - H_{k}}{(3k+1) \binom{3k}{k}\,2^{k + 1}} = \frac{\pi^2}{48} - \frac{\ln^2 2}{10} + \frac{2}{5}\,G,
\end{equation*}
where $G=\sum_{j=0}^\infty (-1)^j / (1+2j)^2$ is Catalan's constant. Another difficult evaluation that will be derived is
\begin{equation*}
\begin{split}
\sum_{k=0}^\infty \frac{H_{2k} - H_k}{(3k + 1) \binom{3k}{k}} \frac{(- 1)^k k}{4^{k + 1}} 
&= \frac{3}{{448}}\ln 2 - \frac{9}{{512}}\ln^2 2 - \frac{3}{{128}}\arctan^2 \left( {\frac{{\sqrt 7 }}{5}} \right)\\
&\qquad - \left( {\frac{1}{{224}} - \frac{{89}}{{6272}}\ln 2} \right)\sqrt 7 \arctan \left( {\frac{{\sqrt 7 }}{5}} \right).
\end{split}
\end{equation*}

We proceed wit two special functions that will be needed. Let $\Li_2(z)$ be the dilogarithm defined by (see Lewin \cite{Lewin})
\begin{equation*}
\Li_2(z) = \sum_{k=1}^\infty \frac{z^k}{k^2}, \qquad |z|\leq 1,
\end{equation*}
having the special values
\begin{equation}\label{eq.k4mxnco}
\Li_2(1) = \frac{\pi^2}{6}, \quad\text{and}\quad \Li_2\left (\frac{1}{2}\right) = \frac{\pi^2}{12} - \frac{\ln^2(2)}{2}.
\end{equation}
Let also $\Cl_2(z)$ be the Clausen's function defined by~\cite{Lewin,tric}
\begin{equation*}
\Cl_2 (z) = \sum_{n = 1}^\infty \frac{\sin (nz)}{n^2} = - \int_0^z \ln |2\sin (\theta /2)|d\theta.
\end{equation*}
This function has the functional relations
\begin{gather}
\Cl_2 (\pi  + \theta ) = - \Cl_2 (\pi - \theta ),\\
\Cl_2 (\theta ) = - \Cl_2 (2\pi - \theta ),\\
\frac{1}{2}\Cl_2 (2\theta ) = \Cl_2 (\theta ) - \Cl_2 (\pi - \theta )\label{eq.rqfmoa7};
\end{gather}
and the special values
\begin{equation}
\Cl_2(n\pi)=0,\quad n\in\mathbb Z^+,
\end{equation}
and
\begin{equation}\label{eq.v1bi0dh}
\Cl_2(\pi/2)=G=-\Cl_2(3\pi/2),
\end{equation}
where $G$ is Catalan's constant. 

We conclude this section with a motivation of our approach. We start with the Beta integral \cite{Srivastava}:
\begin{equation*}
\int_0^1 x^{a-1} (1-x)^{b-1} dx = B(a,b) = \frac{\Gamma(a) \Gamma(b)}{\Gamma(a+b)}, \qquad a,b>0.
\end{equation*}
Differentiating the above definition with respect to $a$ and using the fact that
\begin{equation*}
\frac{d}{da} x^{a-1} = x^{a-1}\ln(x)
\end{equation*}
we get
\begin{equation*}
\int_0^1 x^{a-1} (1-x)^{b-1} \ln(x) dx = \frac{\Gamma(a) \Gamma(b)}{\Gamma(a+b)}\big (\psi(a) - \psi(a+b) \big),
\end{equation*}
where $\psi(x)=\Gamma'(x)/\Gamma(x)$ is the psi or digamma function, $\Gamma(x)$ being the Gamma function.
This function is related to harmonic numbers via $\psi(n+1)=H_n-\gamma$, where $\gamma$ is the Euler-Mascheroni constant. 
From here we can make the transformations $a\mapsto ka+1$ and $b\mapsto 2kb+1$ to obtain
\begin{align*}
\int_0^1 x^{ka} (1-x)^{2kb} \ln(x) dx &= \frac{\Gamma(ka+1) \Gamma(2kb+1)}{\Gamma(k(a+2b)+2)}\Big (\psi(ka+1) - \psi(k(a+2b)+2) \Big) \\
&= \frac{H_{ka} - H_{k(a+2b)+1}}{(k(a+2b)+1) \binom{k(a+2b)}{ka}}.
\end{align*}
Also, by symmetry or by applying the transformations $a\mapsto 2ka+1$ and $b\mapsto kb+1$ we obtain
\begin{equation*}
\int_0^1 x^{2ka} (1-x)^{kb} \ln(x) dx = \frac{H_{2ka} - H_{k(2a+b)+1}}{(k(2a+b)+1) \binom{k(2a+b)}{kb}}.
\end{equation*}
Now, let $a=b=1$. Then
\begin{equation*}
\int_0^1 x^{k} (1-x)^{2k} \ln(x) dx = \frac{H_{k} - H_{3k+1}}{(3k+1) \binom{3k}{k}},
\end{equation*}
and we can consider the series (for all $z\in\mathbb{C}$ with $|z|\leq 1$)
\begin{equation}\label{eq.tkvzjtd}
\sum_{k=0}^\infty \frac{H_{3k+1} - H_{k}}{(3k+1) \binom{3k}{k}} z^k = - \int_0^1 \frac{\ln(x)}{1-zx(1-x)^2} dx, 
\end{equation}
which can also be written as
\begin{equation}\label{eq.gzvmjym}
\sum_{k=0}^\infty \frac{H_{3k+1} - H_{k}}{(3k+1) \binom{3k}{k}}\,\frac{1}{z^{k+1}} = \int_0^1 \frac{{\ln x}}{{x(1-x)^2 - z}}\,dx,\quad |z|\ge 1.
\end{equation}
In particular,
\begin{equation}\label{eq.p1bdo31}
\sum_{k=0}^\infty \frac{H_{3k+1} - H_{k}}{(3k+1) \binom{3k}{k}}\,\frac{1}{2^{k + 1}} = \int_0^1 \frac{{\ln (x)}}{(x - 2)(x^2 + 1)}\,dx
\end{equation}
and
\begin{equation}\label{eq.r39hp67}
\sum_{k=0}^\infty \frac{H_{3k+1} - H_{k}}{(3k+1) \binom{3k}{k}}\,\frac {(-1)^k}{4^{k + 1}} = -\int_0^1 \frac{{\ln (x)}}{(x + 1)(x^2-3x+4)}\,dx. 
\end{equation}
Similarly, we obtain 
\begin{align*}
\int_0^1 x^{2ka} (1-x)^{ka} \ln(x) dx &= - \int_1^0 (1-x)^{2ka}\,x^{ka} \ln(1-x) dx \\
&= \frac{H_{2ka} - H_{k(2a+b)+1}}{(k(2a+b)+1) \binom{k(2a+b)}{kb}}.
\end{align*}
This gives for all $z\in\mathbb{C}$ with $|z|\leq 1$
\begin{equation*}
\sum_{k=0}^\infty \frac{H_{ka} - H_{2ka}}{(3ka+1) \binom{3ka}{ka}} z^k = \int_0^1 \frac{\ln\left (\frac{x}{1-x}\right )}{1-zx^a(1-x)^{2a}} dx
\end{equation*}

or the particular relation

\begin{equation}\label{eq.wl9np1c}
\sum_{k=0}^\infty \frac{H_{2k} - H_{k}}{(3k+1) \binom{3k}{k}} z^k = - \int_0^1 \frac{\ln\left (\frac{x}{1-x}\right )}{1-zx(1-x)^{2}} dx.
\end{equation}

The evaluation of the integrals is not trivial but can be done by applying some additional theory.

\section{Main results, Part 1}

In this section, we explicitly deal with the series on the left hand side of \eqref{eq.tkvzjtd}.

\begin{lemma}
For $\lambda\not\in [0,1)$ we have
\begin{gather}
\int_0^1 \frac{{\ln x}}{{x - \lambda }}\,dx = \Li_2 \left( {\frac{1}{\lambda }} \right),\label{eq.taczr9z}\\
\int_0^1 \frac{{\ln (x/(1 - x))}}{{x - \lambda }}\,dx = - \frac{1}{2}\ln^2 \left( {\frac{{\lambda - 1}}{\lambda }} \right) \label{eq.yqwhh2z}.
\end{gather}
\end{lemma}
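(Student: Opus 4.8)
The plan is to prove the first identity \eqref{eq.taczr9z} directly by a power-series expansion valid for large $|\lambda|$ and then propagate it to the whole cut plane by analytic continuation; the second identity \eqref{eq.yqwhh2z} will follow by splitting the logarithm, one reflection substitution, and Landen's dilogarithm identity.

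First I would establish \eqref{eq.taczr9z}. For $|\lambda|>1$ I expand the kernel as the geometric series $\frac{1}{x-\lambda}=-\sum_{n\ge 0}x^n/\lambda^{n+1}$, which converges uniformly for $x\in[0,1]$, so termwise integration is legitimate. Inserting the elementary moment $\int_0^1 x^n\ln x\,dx=-1/(n+1)^2$ gives $\int_0^1\frac{\ln x}{x-\lambda}\,dx=\sum_{m\ge 1}(1/\lambda)^m/m^2=\Li_2(1/\lambda)$. Both sides are holomorphic in $\lambda$ on $\mathbb{C}\setminus[0,1]$: the integrand has no pole on $[0,1]$ once $\lambda\notin[0,1)$, the apparent singularity at $x=1$ in the case $\lambda=1$ being removable because $\ln x$ vanishes there, while $\Li_2(1/\lambda)$ is holomorphic away from $1/\lambda\in[1,\infty)$. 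Hence agreement on $|\lambda|>1$ forces agreement throughout by the identity theorem, the boundary values being recovered by continuity.

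For \eqref{eq.yqwhh2z} I would write $\ln\frac{x}{1-x}=\ln x-\ln(1-x)$ and handle the two pieces separately. The first contributes $\Li_2(1/\lambda)$ via \eqref{eq.taczr9z}. For the second, the substitution $x\mapsto 1-x$ turns $\int_0^1\frac{\ln(1-x)}{x-\lambda}\,dx$ into $-\int_0^1\frac{\ln x}{x-(1-\lambda)}\,dx=-\Li_2\!\left(\frac{1}{1-\lambda}\right)$, applying \eqref{eq.taczr9z} now with parameter $1-\lambda$. Therefore the integral equals $\Li_2(1/\lambda)+\Li_2\!\left(\frac{1}{1-\lambda}\right)$. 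Setting $z=1/\lambda$, so that $\frac{1}{1-\lambda}=\frac{z}{z-1}=\frac{-z}{1-z}$, Landen's identity $\Li_2\!\left(\frac{-z}{1-z}\right)=-\Li_2(z)-\frac12\ln^2(1-z)$ collapses the sum to $-\frac12\ln^2(1-z)=-\frac12\ln^2\!\left(\frac{\lambda-1}{\lambda}\right)$, which is the assertion.

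The main obstacle I anticipate is the bookkeeping of domains and branches rather than any deep computation: I must ensure that $1-\lambda$ stays off the cut $[0,1)$ when \eqref{eq.taczr9z} is reused in the second step, that the logarithm and dilogarithm in Landen's identity are taken on the principal branches consistent with the real values, and that the analytic continuation genuinely covers the stated set $\lambda\notin[0,1)$, including the endpoint behaviour at $\lambda=1$. As an alternative that avoids Landen, one may differentiate $I(\lambda)=\int_0^1\frac{\ln(x/(1-x))}{x-\lambda}\,dx$ in $\lambda$, evaluate the resulting $\int_0^1\frac{dx}{x(1-x)(x-\lambda)}$ by partial fractions to obtain $I'(\lambda)=-\frac{1}{\lambda(\lambda-1)}\ln\frac{\lambda-1}{\lambda}$, and fix the integration constant by letting $\lambda\to\infty$; the reflection-plus-Landen route is, however, shorter and keeps the divergent boundary terms of the integration by parts out of sight.
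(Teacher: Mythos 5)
Your argument is correct. For \eqref{eq.yqwhh2z} it is essentially the paper's proof: the same splitting $\ln(x/(1-x))=\ln x-\ln(1-x)$, the same reflection $u=1-x$ reducing the second piece to $-\Li_2\left(\frac{1}{1-\lambda}\right)$, and the same Landen-type identity (the paper cites Lewin's $\Li_2(x)+\Li_2\left(\frac{x}{x-1}\right)=-\tfrac12\ln^2(1-x)$, which with $x=1/\lambda$ is exactly your formulation). For \eqref{eq.taczr9z} you take a genuinely different route: the paper quotes the closed-form antiderivative $\Li_2(x/\lambda)+\ln x\,\ln\left(\frac{\lambda-x}{\lambda}\right)$, whose boundary terms vanish at both endpoints, so the identity holds on the whole stated domain in one stroke; you instead expand the kernel geometrically for $|\lambda|>1$, integrate termwise against $\int_0^1 x^n\ln x\,dx=-1/(n+1)^2$, and propagate to $\mathbb{C}\setminus[0,1]$ by the identity theorem, recovering $\lambda=1$ by continuity. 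The antiderivative route is shorter but rests on a formula one must verify by differentiation; your route is elementary at the series level but carries the burden of justifying holomorphy of the parameter-dependent integral and the endpoint limit --- all standard, and you flag them. A small point in your favor: you note that reusing \eqref{eq.taczr9z} at the parameter $1-\lambda$ requires $1-\lambda\notin[0,1)$, i.e., $\lambda\neq 1$; indeed both sides of \eqref{eq.yqwhh2z} diverge at $\lambda=1$, so the second identity really needs $\lambda\notin[0,1]$ --- a restriction the lemma's statement elides, though the paper imposes it later in \eqref{eq.y3tw4qr}.
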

\begin{proof}
Identity~\eqref{eq.taczr9z} follows immediately from the fact that
\begin{equation*}
\int {\frac{{\ln x}}{{x - \lambda }}\,dx} = {\rm Li}_2 \left( {\frac{x}{\lambda }} \right) + \ln x\ln \left( {\frac{{\lambda - x}}{\lambda }} \right) + const.
\end{equation*}
Now,
\begin{equation}
\int_0^1 \frac{{\ln (x/(1 - x))}}{{x - \lambda }}\,dx = \int_0^1 \frac{{\ln x}}{{x - \lambda }}\,dx - \int_0^1 \frac{{\ln (1 - x)}}{{x - \lambda }}\,dx.
\end{equation}
Let
\begin{equation*}
I = \int_0^1 \frac{{\ln (1 - x)}}{{x - \lambda }}\,dx.
\end{equation*}
A change of variable $u=1 - x$ gives
\begin{equation*}
I = - \int_0^1 \frac{{\ln u}}{{u - (1 - \lambda )}}\,du = - \Li_2 \left( {\frac1{{1 - \lambda }}} \right)
\end{equation*}
on account of~\eqref{eq.taczr9z}. Thus
\begin{equation*}
\int_0^1 {\frac{{\ln (x/(1 - x))}}{{x - \lambda }}\,dx} = \Li_2 \left( {\frac{1}{\lambda }} \right) + \Li_2 \left( {\frac1{{1 - \lambda }}} \right) = - \frac12\ln^2 \left( {\frac{{\lambda - 1}}{\lambda }} \right)
\end{equation*}
since~\cite[(8), p.~283]{Lewin}
\begin{equation*}
\Li_2 (x) + \Li_2 \left( {\frac{x}{{x - 1}}} \right) = - \frac{1}{2}\ln^2 (1 - x),\quad x < 1.
\end{equation*}
\end{proof}

\begin{lemma}[{\cite[p.~291]{Lewin}}]
For $\theta\in [-2\pi,2\pi]$, we have
\begin{equation}\label{eq.lk8zgeb}
\Li_2 \left( {e^{i\theta } } \right) = \frac{{\pi ^2 }}{6} + \frac{{\theta^2 - 2\pi |\theta |}}{4} + i\Cl_2 (\theta ),
\end{equation}
where $\Cl_2(z)$ is Clausen's function.
\end{lemma}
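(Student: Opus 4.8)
The plan is to separate the power series on the unit circle into its real and imaginary parts and to identify each with a closed form. Writing $\Li_2(e^{i\theta}) = \sum_{k=1}^\infty e^{ik\theta}/k^2 = \sum_{k=1}^\infty \cos(k\theta)/k^2 + i\sum_{k=1}^\infty \sin(k\theta)/k^2$, the imaginary part is, by the very definition of Clausen's function, exactly $\Cl_2(\theta)$. Hence the whole content of the lemma lies in evaluating the even cosine series $C(\theta) := \sum_{k=1}^\infty \cos(k\theta)/k^2$, and in confirming that the imaginary part reproduces $\Cl_2(\theta)$ in its integral form.

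My preferred route uses the integral representation $\Li_2(z) = -\int_0^z \ln(1-t)/t\,dt$. Since $\Li_2(1) = \pi^2/6$ by \eqref{eq.k4mxnco}, I would integrate along the unit-circle arc from $1$ to $e^{i\theta}$, parametrising $t = e^{i\phi}$ with $0\le \phi \le \theta$; the substitution turns the arc contribution into $-i\int_0^\theta \ln(1-e^{i\phi})\,d\phi$. The key algebraic simplification is the polar form $1 - e^{i\phi} = -2i\sin(\phi/2)\,e^{i\phi/2}$, valid for $0<\phi<2\pi$, which (choosing the principal branch) gives $\ln(1-e^{i\phi}) = \ln\bigl(2\sin(\phi/2)\bigr) + i(\phi-\pi)/2$.

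Substituting this and separating parts, the real part produces $\frac{\pi^2}{6} + \int_0^\theta (\phi-\pi)/2\,d\phi = \frac{\pi^2}{6} + \frac{\theta^2}{4} - \frac{\pi\theta}{2}$, which equals $\frac{\pi^2}{6} + (\theta^2 - 2\pi|\theta|)/4$ for $\theta\in(0,2\pi)$, while the imaginary part is $-\int_0^\theta \ln\bigl(2\sin(\phi/2)\bigr)\,d\phi = \Cl_2(\theta)$, read off directly from the definition. This settles the identity on $(0,2\pi)$. To extend it to negative angles, I would use that $\Li_2$ has real Taylor coefficients, so $\Li_2(e^{i\theta}) = \overline{\Li_2(e^{-i\theta})}$; combined with the oddness $\Cl_2(-\theta) = -\Cl_2(\theta)$ and the evenness of $\theta^2 - 2\pi|\theta|$, conjugation flips the sign of the imaginary part and leaves the real part unchanged, yielding the formula on $(-2\pi,0)$. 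The endpoints $\theta = 0,\pm 2\pi$ then follow by continuity of both sides.

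The main obstacle is analytic bookkeeping at the branch points rather than any deep idea: one must justify the arc integration through the integrable logarithmic singularity of $\ln(1-t)/t$ at $t=1$, and pin down the branch so that the phase factor $-2i = 2e^{-i\pi/2}$ is read correctly — getting this wrong shifts the real part by a spurious linear term in $\theta$. An alternative that sidesteps the contour is to differentiate $C(\theta)$ termwise to the classical sawtooth series $\sum_{k=1}^\infty \sin(k\theta)/k = (\pi-\theta)/2$ on $(0,2\pi)$, integrate back, and fix the constant with $C(0) = \pi^2/6$; there the obstacle merely migrates to justifying termwise differentiation, which holds on compact subsets of $(0,2\pi)$ by Dirichlet's test, the differentiated series converging only conditionally and non-uniformly near the endpoints.
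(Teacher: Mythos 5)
Your proof is correct, but there is nothing in the paper to compare it against: the paper does not prove this lemma at all, it simply quotes it from Lewin's book (the citation \cite[p.~291]{Lewin} in the lemma header is the entire ``proof''). What you have supplied is the standard self-contained derivation, and it checks out. Splitting $\Li_2(e^{i\theta})=\sum_{k\ge1}\cos(k\theta)/k^2+i\sum_{k\ge1}\sin(k\theta)/k^2$ identifies the imaginary part with $\Cl_2(\theta)$ by definition; the polar factorization $1-e^{i\phi}=2\sin(\phi/2)\,e^{i(\phi-\pi)/2}$ is right, and for $0<\phi<2\pi$ the phase $(\phi-\pi)/2$ indeed lies inside the principal branch, so $\ln(1-e^{i\phi})=\ln\bigl(2\sin(\phi/2)\bigr)+i(\phi-\pi)/2$; the real-part integral $\int_0^\theta(\phi-\pi)/2\,d\phi=\theta^2/4-\pi\theta/2$ reproduces $(\theta^2-2\pi|\theta|)/4$ on $(0,2\pi)$, while the imaginary part recovers $\Cl_2$ in its integral form, confirming consistency of the two definitions; conjugation symmetry $\Li_2(e^{-i\theta})=\overline{\Li_2(e^{i\theta})}$ plus oddness of $\Cl_2$ and evenness of $\theta^2-2\pi|\theta|$ extends the identity to $(-2\pi,0)$; and continuity (the series converges uniformly on the closed unit disc) settles $\theta=0,\pm2\pi$. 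The technical points you flag are the genuine ones and are routine to discharge: the boundary integration can be justified by integrating along the arc of radius $r<1$ and letting $r\to1^-$ with dominated convergence (the logarithmic singularity at $t=1$ is integrable uniformly in $r$), or one can take your alternative route through the sawtooth series $\sum_{k\ge1}\sin(k\theta)/k=(\pi-\theta)/2$, where Dirichlet's test gives uniform convergence on compact subsets of $(0,2\pi)$ and the constant is fixed by $C(0)=\pi^2/6$. Either version would serve as a legitimate proof of the lemma that the paper leaves to the literature; given the paper's context, citing Lewin is the economical choice, but your argument buys self-containedness at the cost of the branch bookkeeping you correctly identify as the only delicate step.
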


\begin{proposition}\label{prop.nxjp5ek}
We have 
\begin{equation}\label{eq.grfgv7c}
\sum_{k=0}^\infty \frac{H_{3k+1} - H_{k}}{(3k+1) \binom{3k}{k}\,2^{k + 1}} = \frac{\pi^2}{48} - \frac{\ln^2(2)}{10} + \frac{2}{5}\, G.
\end{equation}
\end{proposition}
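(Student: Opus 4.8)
The plan is to start from the integral representation \eqref{eq.p1bdo31}, which already reduces the series to
\[
\int_0^1 \frac{\ln x}{(x-2)(x^2+1)}\,dx,
\]
so the entire task is to evaluate this single integral. I would first split the integrand by partial fractions. Since $x^2+1=(x-i)(x+i)$ has its roots at $\pm i = e^{\pm i\pi/2}$, I would use the full complex decomposition
\[
\frac{1}{(x-2)(x^2+1)} = \frac{1/5}{x-2} + \frac{a}{x-i} + \frac{\bar a}{x+i},
\]
where $a$ is the residue at $x=i$, computed from $a = 1/\!\left((i-2)(2i)\right)$. The point of factoring over $\mathbb{C}$ is that every linear piece is then exactly of the shape covered by \eqref{eq.taczr9z}, with $\lambda \in \{2, i, -i\}$, all of which lie outside $[0,1)$.

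Next I would apply \eqref{eq.taczr9z} term by term, producing a linear combination of $\Li_2(1/2)$, $\Li_2(1/i)=\Li_2(-i)$, and $\Li_2(1/(-i))=\Li_2(i)$. The real dilogarithm is handled by the special value $\Li_2(1/2) = \pi^2/12 - \ln^2(2)/2$ from \eqref{eq.k4mxnco}. For the two complex values I would write $-i = e^{-i\pi/2}$ and $i = e^{i\pi/2}$ and invoke Lemma \eqref{eq.lk8zgeb}; with $\theta = \pm \pi/2$ this gives
\[
\Li_2\!\left(e^{\pm i\pi/2}\right) = -\frac{\pi^2}{48} \pm i\,\Cl_2(\pi/2) = -\frac{\pi^2}{48} \pm iG,
\]
using the special value \eqref{eq.v1bi0dh} together with the oddness of the Clausen function.

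Finally I would assemble the pieces. Because the residues at $\pm i$ are complex conjugates and the corresponding dilogarithm values are also conjugate, the two complex contributions add up to twice a real part, so all imaginary terms cancel, as they must for a real integral. Collecting the $\pi^2$, $\ln^2 2$, and $G$ contributions and simplifying the rational coefficients should yield exactly $\pi^2/48 - \ln^2(2)/10 + 2G/5$, matching \eqref{eq.grfgv7c}.

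I do not expect a serious conceptual obstacle, since every ingredient is already prepared in the preliminaries; the only delicate points are the bookkeeping of the complex residue $a$ and the correct sign of the Clausen value dictated by the sign of $\theta$. An equivalent, slightly more elementary route avoids complex $\lambda$ altogether: one keeps the real quadratic partial fraction $\tfrac15\bigl(1/(x-2) - (x+2)/(x^2+1)\bigr)$ and evaluates $\int_0^1 \ln x/(x^2+1)\,dx = -G$ and $\int_0^1 x\ln x/(x^2+1)\,dx = -\pi^2/48$ (the latter via $u=x^2$ and $\int_0^1 \ln u/(1+u)\,du = -\pi^2/12$). This gives the same answer and could serve as an independent check.
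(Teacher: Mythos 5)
Your proposal is correct and follows essentially the same route as the paper: the same complex partial-fraction decomposition (your residue $a = 1/((i-2)(2i))$ equals the paper's $-1/(4i+2)$), followed by \eqref{eq.taczr9z}, the special values \eqref{eq.k4mxnco}, Lemma \eqref{eq.lk8zgeb} at $\theta=\pm\pi/2$, and the Clausen value \eqref{eq.v1bi0dh} (the paper obtains $\Cl_2(-\pi/2)=-G$ via a functional relation rather than oddness, an immaterial difference). Even your ``independent check'' via the real decomposition $\tfrac15\bigl(1/(x-2)-(x+2)/(x^2+1)\bigr)$ coincides with the paper's own alternative proof of the proposition.
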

\begin{proof}
Using the decomposition 
\begin{equation*}\label{eq.f2814qz}
\frac1{{(x - 2)(x^2  + 1)}} = \frac1{{(4i - 2)}}\,\frac1{{x + i}} - \frac1{{(4i + 2)}}\,\frac1{{x - i}} + \frac15\,\frac{1}{{x - 2}}
\end{equation*}
we have
\begin{equation*}\label{eq.czhzvt3}
\begin{split}
\int_0^1 \frac{{\ln x}}{{(x - 2)(x^2  + 1)}}\,dx &= \frac{1}{{4i - 2}}\int_0^1 \frac{{\ln x}}{{x + i}}\,dx 
- \frac{1}{{4i + 2}}\int_0^1 \frac{{\ln x}}{{x - i}}\,dx + \frac{1}{5}\int_0^1 \frac{{\ln x}}{{x - 2}}\,dx \\
& = \frac{1}{{4i - 2}}\Li_2 \left( {e^{i\pi /2} } \right) - \frac{1}{{4i + 2}}\Li_2 \left( {e^{ - i\pi /2} } \right) 
+ \frac{1}{5}\Li_2 \left( {\frac{1}{2}} \right),
\end{split}
\end{equation*}
in view of~\eqref{eq.taczr9z}. Now,~\eqref{eq.lk8zgeb} gives
\begin{equation}\label{eq.p9szd4d}
\Li_2 (e^{i\pi /2} ) = - \frac{{\pi ^2 }}{{48}} + iG,\quad \Li_2 (e^{- i\pi /2} ) = - \frac{{\pi ^2 }}{{48}} + i\Cl_2 (- \pi /2).
\end{equation}
Using~\eqref{eq.rqfmoa7} with $\theta=-\pi/2$ gives
\begin{equation}
\Cl_2(-\pi/2) = \Cl_2(3\pi/2) = -G,\qquad \text{by~\eqref{eq.v1bi0dh}};
\end{equation}
so that
\begin{equation}\label{eq.mfthsdb}
\Li_2 (e^{ - i\pi /2} ) = - \frac{{\pi ^2 }}{{48}} - iG.
\end{equation}
Thus, using~\eqref{eq.p9szd4d},~\eqref{eq.mfthsdb} and the evaluation of $\Li_2(1/2)$ from~\eqref{eq.k4mxnco}, we have
\begin{equation*}
\int_0^1 \frac{{\ln x}}{{(x - 2)(x^2  + 1)}}\,dx = \frac{1}{{4i - 2}}\left( { - \frac{{\pi ^2 }}{{48}} + iG} \right) 
- \frac{1}{{4i + 2}}\left(- \frac{{\pi^2 }}{{48}} - iG \right) + \frac{1}{5}\left( \frac{{\pi^2 }}{{12}} - \frac{1}{2}\ln^2 2 \right)
\end{equation*}
and hence~\eqref{eq.grfgv7c}, in view of~\eqref{eq.p1bdo31}.
\end{proof}

\begin{proof}[Alternative proof of Proposition~\ref{prop.nxjp5ek}] 
We begin with the partial fraction decomposition
\begin{equation*}
\frac{1}{(x-2)(x^2+1)} = \frac{1}{5} \Big ( \frac{1}{x-2} - \frac{x+2}{x^2+1}\Big ).
\end{equation*}
Hence, 
\begin{equation*}
\int_0^1 \frac{\ln x}{(x-2)(x^2+1)}dx = \frac{1}{5} \Big ( \int_0^1 \frac{\ln x}{x-2} dx - \int_0^1 \frac{(x+2)\ln x}{x^2+1} dx \Big ).
\end{equation*}
As
\begin{equation*}
\int \frac{\ln x}{x-2} dx = \ln(2)\ln(|x-2|) - \Li_2\left (\frac{2-x}{2}\right) + const,
\end{equation*}
we get
\begin{equation*}
\int_0^1 \frac{\ln x}{x-2} dx = \frac{\pi^2}{12} - \frac{\ln^2(2)}{2}.
\end{equation*}
Next,
\begin{equation*}
\int \frac{x\ln x}{x^2+1} dx = \frac{1}{2} \Big ( \Li_2(ix) + \Li_2(-ix) + \ln(x)\ln(1+x^2)\Big ) + const.,
\end{equation*}
and this gives
\begin{equation*}
\int \frac{x\ln x}{x^2+1} dx = \frac{1}{2} \Big ( \Li_2(i) + \Li_2(-i) \Big ) = - \frac{\pi^2}{48},
\end{equation*}
where the relation (see also \cite{Lewin})
\begin{equation*}
\Li_2(x) + \Li_2(-x) = \frac{1}{2} \Li_2(x^2),
\end{equation*}
was used. Finally, from the indefinite integral
\begin{equation*}
2 \int \frac{\ln x}{x^2+1} dx = i \Big ( \Li_2(ix) - \Li_2(-ix) + \ln(x)\ln\Big(\frac{i+x}{i-x}\Big )\Big ) + const,
\end{equation*}
we get
\begin{equation*}
2 \int_0^1 \frac{\ln x}{x^2+1} dx = i ( \Li_2(i) - \Li_2(-i)) = -2 \sum_{j=0}^\infty \frac{(-1)^j}{(1+2j)^2} = -2 G. 
\end{equation*}
Putting everything together we obtain the claimed result.
\end{proof}

\begin{proposition}\label{prop.vhpcqxx}
We have
\begin{equation}\label{eq.fx7c65q}
\sum_{k = 0}^\infty  {\frac{{H_{3k + 1}  - H_k }}{{(3k + 1)\binom{{3k}}{k}}}\frac{{( - 1)^k }}{{4^{k + 1} }} = \frac{{\pi ^2 }}{{96}} + \frac{1}{8}\,\Re\Li_2 \left( {\frac{3}{8} + \frac{{i\sqrt 7 }}{8}} \right)}  + \frac{{5\sqrt 7 }}{{56}}\,\Im\Li_2 \left( {\frac{3}{8} + \frac{{i\sqrt 7 }}{8}} \right).
\end{equation}
\end{proposition}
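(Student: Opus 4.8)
The plan is to start from the integral representation~\eqref{eq.r39hp67}, which already rewrites the target series as $-\int_0^1 \frac{\ln x}{(x+1)(x^2-3x+4)}\,dx$, and to evaluate this integral by exactly the partial-fraction-plus-dilogarithm method used in Proposition~\ref{prop.nxjp5ek}. No new analytic machinery beyond identity~\eqref{eq.taczr9z} should be required.

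First I would locate the poles of the integrand. The quadratic $x^2-3x+4$ has discriminant $9-16=-7$, giving the two complex roots $\lambda_\pm = \frac{3\pm i\sqrt 7}{2}$, while the linear factor supplies the real root $-1$. None lies in $[0,1)$, so~\eqref{eq.taczr9z} applies to each simple pole. The crucial arithmetic observation is that $|\lambda_\pm|^2 = \frac{9+7}{4}=4$, whence $1/\lambda_\pm = \frac{3\mp i\sqrt 7}{8}$ — precisely the argument appearing in the claimed closed form~\eqref{eq.fx7c65q}.

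Next I would perform the partial-fraction decomposition
\begin{equation*}
\frac{1}{(x+1)(x^2-3x+4)} = \frac{a_0}{x+1} + \frac{a_+}{x-\lambda_+} + \frac{a_-}{x-\lambda_-},
\end{equation*}
with residues $a_0 = \frac{1}{(-1)^2-3(-1)+4}=\frac18$ and $a_\pm = \frac{1}{(\lambda_\pm+1)(\lambda_\pm-\lambda_\mp)}$; since all data are real, $a_-=\overline{a_+}$. Applying~\eqref{eq.taczr9z} termwise turns the integral into $\frac18\Li_2(-1) + a_+\Li_2(1/\lambda_+) + a_-\Li_2(1/\lambda_-)$. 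Because $\Li_2$ has real Taylor coefficients and $1/\lambda_- = \overline{1/\lambda_+}$, the last two terms are complex conjugates of one another, so their sum equals $2\,\Re\!\big(\overline{a_+}\,\Li_2(1/\lambda_-)\big)$, with $1/\lambda_-=\frac{3+i\sqrt7}{8}$. Expanding this real part produces a linear combination of $\Re\Li_2\!\big(\frac{3}{8}+\frac{i\sqrt7}{8}\big)$ and $\Im\Li_2\!\big(\frac{3}{8}+\frac{i\sqrt7}{8}\big)$.

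The remaining work is bookkeeping: substitute $\Li_2(-1)=-\pi^2/12$, compute $a_+=\frac{2}{-7+5i\sqrt7}=\frac{-7-5i\sqrt7}{112}$ (the denominator $-7+5i\sqrt7$ having modulus-squared $49+175=224$), and collect terms; multiplying by $-1$ per~\eqref{eq.r39hp67} should reproduce the coefficients $\frac{\pi^2}{96}$, $\frac18$, and $\frac{5\sqrt7}{56}$ of~\eqref{eq.fx7c65q}. The only genuine obstacle I anticipate is this complex bookkeeping — keeping the residues, their conjugates, and the real/imaginary split straight so that the three constants emerge with the correct signs. Everything else is a direct transcription of the argument already rehearsed for Proposition~\ref{prop.nxjp5ek}.
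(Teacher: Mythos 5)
Your proposal is correct and follows essentially the same route as the paper: the same partial fraction decomposition of $\frac{1}{(x+1)(x^2-3x+4)}$ (your residue $a_+=\frac{-7-5i\sqrt 7}{112}$ is exactly the paper's coefficient $-\frac{7+5i\sqrt 7}{112}$), followed by termwise application of~\eqref{eq.taczr9z} and a conjugate-pairing argument via $\Re(fg)=\Re f\,\Re g-\Im f\,\Im g$. The only cosmetic difference is that you rationalize $\frac{2}{3\pm i\sqrt 7}=\frac{3\mp i\sqrt 7}{8}$ at the outset, whereas the paper carries the unrationalized form and simplifies at the end.
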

\begin{proof}
Consider the partial fraction decomposition
\begin{equation*}
\begin{split}
\frac{1}{{x(1 - x)^2 + 4}} &= \frac{1}{{(x + 1)(x^2 - 3x + 4)}}\\
&= \frac18\,\frac{1}{{x + 1}} - \frac{1}{{112}}\,\frac{{7 + 5i\sqrt 7 }}{{x - (3 + i\sqrt 7 )/2}} 
- \frac{1}{{112}}\,\frac{{7 - 5i\sqrt 7}}{{x - (3 - i\sqrt 7 )/2}};
\end{split}
\end{equation*}
which allows the integral on the rhs of~\eqref{eq.r39hp67} to be written as
\begin{equation*}
\begin{split}
\int_0^1 \frac{{\ln x\,dx}}{{(x + 1)(x^2 - 3x + 4)}} &= \frac{1}{8}\int_0^1 \frac{{\ln x\,dx}}{{x + 1}} 
- \frac{{7 + 5i\sqrt 7 }}{{112}}\int_0^1 \frac{{\ln x\,dx}}{{x - (3 + i\sqrt 7 )/2}} \\ 
&\qquad - \frac{{7 - 5i\sqrt 7 }}{{112}}\int_0^1 \frac{{\ln x\,dx}}{{x - (3 - i\sqrt 7 )/2}},
\end{split}
\end{equation*}
which upon using~\eqref{eq.taczr9z} gives
\begin{equation}\label{eq.xa20o4n}
\begin{split}
\int_0^1 \frac{{\ln x\,dx}}{{(x + 1)(x^2 - 3x + 4)}} &= \frac{1}{8}\,\Li_2 ( - 1) 
- \frac{{7 + 5i\sqrt 7 }}{{112}}\,\Li_2 \left( {\frac{2}{{3 + i\sqrt 7 }}} \right)\\
&\qquad - \frac{{7 - 5i\sqrt 7 }}{{112}}\,\Li_2 \left( {\frac{2}{{3 - i\sqrt 7 }}} \right)\\
&= \frac{1}{8}\Li_2 ( - 1) - \frac{1}{{56}}\Re\left( {(7 + 5i\sqrt 7 )\Li_2 \left( {\frac{2}{{3 + i\sqrt 7 }}} \right)} \right).
\end{split}
\end{equation}
Use of~\eqref{eq.xa20o4n} in~\eqref{eq.r39hp67} gives~\eqref{eq.fx7c65q}, after simplification. In taking the real part in~\eqref{eq.xa20o4n}, it is convenient to use the fact that $\Re (fg)=\Re f\Re g - \Im f\Im g$ for arbitrary $f$ and $g$.
\end{proof}

Differentiating~\eqref{eq.tkvzjtd} $m$ times with respect to $z$ and thereafter replacing $z$ with $1/z$ gives
\begin{equation}\label{eq.u0vzry8}
\sum_{k=0}^\infty \frac{H_{3k+1} - H_{k}}{(3k+1) \binom{3k}{k}} \frac {\binom km}{z^{k + 1}} =(-1)^m\int_0^1 {\frac{{x^m(1 - x)^{2m}\ln x}}{\left(x(1 - x)^2  - z\right)^{m + 1}}\,dx},\quad |z|\ge 1. 
\end{equation}

\begin{theorem}\label{thm.oxop1ki}
Let $m$ be a non-negative integer and let $z_1$, $z_2$ and $z_3$ be the distinct roots of $x(1 - x)^2-z=0$ where $z$ is a real number such that $|z|\ge1$. Then
\begin{equation}\label{eq.uhuus5d}
\sum_{k=0}^\infty \frac{H_{3k+1} - H_{k}}{(3k+1) \binom{3k}{k}} \frac {\binom{k}{m}}{z^{k + 1}} 
= (-1)^m \sum_{k=1}^3 {\sum_{j=0}^m {a_j{(z_k)}C_j(z_k)}}; 
\end{equation}
where, for $0\le r\le m$,
\begin{equation}\label{eq.rkhw1l3}
a_r{(\lambda)} = \frac{1}{{(m - r)!}}\left. {\frac{{d^{m - r} }}{{dx^{m - r} }}\frac{x^m(1 - x)^{2m}(x - \lambda)^{m + 1}}{{\left(x(1 - x)^2 - z\right)^{m + 1} }}} \right|_{x = \lambda} ,
\end{equation}
\begin{equation}\label{eq.w9cngvk}
C_0 (\lambda )=\Li_2 \left(\frac1\lambda \right),
\end{equation}
and for $r$ a positive integer and $\lambda\not\in [0,1]$,
\begin{equation}\label{eq.y3tw4qr}
C_r (\lambda )= \frac{{( - 1)^r }}{r}\sum_{p = 1}^{r - 1} {\frac{1}{{p\lambda ^{r - p} }}\left( {\frac{1}{{(\lambda  - 1)^p }} - \frac{1}{{\lambda ^p }}} \right)}  - \frac{{( - 1)^r }}{{r\lambda ^r }}\ln \left( {\frac{{\lambda  - 1}}{\lambda }} \right).
\end{equation}
\end{theorem}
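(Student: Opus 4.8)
The plan is to start from the integral representation \eqref{eq.u0vzry8}, which already identifies the left-hand side of \eqref{eq.uhuus5d} with $(-1)^m$ times
\begin{equation*}
J=\int_0^1 \frac{x^m(1-x)^{2m}\ln x}{\left(x(1-x)^2-z\right)^{m+1}}\,dx,
\end{equation*}
so everything reduces to evaluating $J$. Since $x(1-x)^2-z=x^3-2x^2+x-z$ is monic of degree three, the hypothesis that its roots $z_1,z_2,z_3$ are distinct yields $x(1-x)^2-z=\prod_{k=1}^3(x-z_k)$, whence $\left(x(1-x)^2-z\right)^{m+1}=\prod_{k=1}^3(x-z_k)^{m+1}$.

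The next step is a partial fraction decomposition of the rational factor $R(x)=x^m(1-x)^{2m}/\prod_{k=1}^3(x-z_k)^{m+1}$. Its numerator has degree $3m$ and its denominator degree $3m+3$, so $R$ is proper with exactly three poles, each of order $m+1$. The standard formula for the coefficient of $(x-z_k)^{-s}$ at a pole of order $m+1$ is $\frac{1}{(m+1-s)!}\frac{d^{\,m+1-s}}{dx^{\,m+1-s}}\big[(x-z_k)^{m+1}R(x)\big]\big|_{x=z_k}$; setting $s=r+1$ (so that $m+1-s=m-r$) turns this into exactly $a_r(z_k)$ as defined in \eqref{eq.rkhw1l3}. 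Thus $R(x)=\sum_{k=1}^3\sum_{r=0}^m a_r(z_k)\,(x-z_k)^{-(r+1)}$, and integrating against $\ln x$ term by term gives
\begin{equation*}
J=\sum_{k=1}^3\sum_{r=0}^m a_r(z_k)\,C_r(z_k),\qquad C_r(\lambda):=\int_0^1\frac{\ln x}{(x-\lambda)^{r+1}}\,dx,
\end{equation*}
which is the claimed \eqref{eq.uhuus5d} once the $C_r$ are identified. For $r=0$ the integral $C_0(\lambda)$ is precisely \eqref{eq.taczr9z}, giving $C_0(\lambda)=\Li_2(1/\lambda)$ as in \eqref{eq.w9cngvk}. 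Note that each $z_k$ lies outside $[0,1]$, since $x(1-x)^2$ is bounded by $4/27<1\le|z|$ on the unit interval; hence all integrals converge and the branch choices are unambiguous.

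The remaining and most delicate step is to verify \eqref{eq.y3tw4qr} for $r\ge1$. I would integrate $C_r(\lambda)$ by parts with $u=\ln x$ and $dv=(x-\lambda)^{-(r+1)}dx$, producing the boundary term $\left[-\frac{\ln x}{r(x-\lambda)^r}\right]_0^1$ together with $\frac1r\int_0^1\frac{dx}{x(x-\lambda)^r}$. Both pieces diverge individually at $x=0$, so the computation must be done with a cutoff $\epsilon$ and the limit $\epsilon\to0^+$ taken at the end. Decomposing $\frac{1}{x(x-\lambda)^r}=\frac{(-1)^r}{\lambda^r\,x}+\sum_{p=1}^r\frac{(-1)^{r-p}}{\lambda^{r-p+1}(x-\lambda)^p}$ and integrating each elementary piece, the $\ln\epsilon$ arising from the $1/x$ term cancels exactly against the $\ln\epsilon$ in the boundary term. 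The surviving $p=1$ contribution assembles the logarithmic term $-\frac{(-1)^r}{r\lambda^r}\ln\!\left(\frac{\lambda-1}{\lambda}\right)$, while the $p\ge2$ pieces, after the shift $p\mapsto p-1$, collapse to the finite sum $\frac{(-1)^r}{r}\sum_{p=1}^{r-1}\frac{1}{p\lambda^{r-p}}\big(\frac{1}{(\lambda-1)^p}-\frac{1}{\lambda^p}\big)$, which is exactly \eqref{eq.y3tw4qr}. I expect the main obstacle to be precisely this bookkeeping: tracking the cancellation of the divergent $\ln\epsilon$ terms under the $\epsilon$-regularization and keeping the branch of the logarithm consistent when $\lambda=z_k$ is complex.
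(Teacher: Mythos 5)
Your proposal is correct and follows the same overall strategy as the paper: both start from the integral representation \eqref{eq.u0vzry8}, perform the partial fraction decomposition of $x^m(1-x)^{2m}/\left(x(1-x)^2-z\right)^{m+1}$ with the coefficients $a_r(z_k)$ given by the derivative formula \eqref{eq.rkhw1l3}, and reduce everything to the elementary integrals $C_j(z_k)=\int_0^1 \ln x\,(x-z_k)^{-j-1}\,dx$. The one place where you genuinely diverge is the derivation of \eqref{eq.y3tw4qr} for $r\ge 1$: the paper obtains it in one line by differentiating the identity \eqref{eq.taczr9z}, namely $\int_0^1 \frac{\ln x}{x-\lambda}\,dx=\Li_2(1/\lambda)$, $j$ times with respect to the parameter $\lambda$ (this is \eqref{eq.xhcawq8}), which keeps every intermediate quantity finite; you instead integrate by parts and decompose $1/\left(x(x-\lambda)^r\right)$, which forces the $\epsilon$-cutoff and the cancellation of the two $\ln\epsilon$ terms. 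Your computation does check out (the divergent pieces cancel exactly, and after the index shift the $p\ge 2$ contributions reproduce the stated finite sum), and it is more self-contained, since it does not presuppose the closed form of the repeated $\lambda$-derivatives of $\Li_2(1/\lambda)$; the paper's parametric differentiation buys brevity and sidesteps all divergences, at the price of leaving that closed form as an unverified (though routine) differentiation. A further small point in your favor: your explicit observation that each root satisfies $z_k\notin[0,1]$, because $\max_{x\in[0,1]}x(1-x)^2=4/27<1\le|z|$, justifies the convergence of every $C_j(z_k)$ and the branch choices, a detail the paper uses implicitly but never states.
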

\begin{proof}
We start with the partial fraction decomposition
\begin{equation}
\begin{split}
\frac{x^m(1 - x)^{2m}}{{\left( {x(1 - x)^2 - z} \right)^{m + 1} }} &= \sum_{j = 0}^m {\left( {\frac{{a_j{(z_1)} }}{{(x - z_1)^{j + 1} }} + \frac{{a_j{(z_2)} }}{{(x - z_2)^{j + 1} }} + \frac{{a_j{(z_3)} }}{{(x - z_3)^{j + 1} }}} \right)}\\ 
& = \sum_{k = 1}^3 {\sum_{j = 0}^m {\frac{{a_j{(z_k)} }}{{(x - z_k )^{j + 1} }}} } ,
\end{split}
\end{equation}
where, for $0\le r\le m$, the coefficients $a_r{(z_1)}$, $a_r{(z_2)}$, $a_r{(z_3)}$ are found from~\eqref{eq.rkhw1l3}.

We therefore have
\begin{equation}\label{eq.n0a1snj}
\int_0^1 {\frac{{\left(x^m(1 - x)^{2m}\right)\ln x}}{{\left( {x(1 - x)^2 - z} \right)^{m + 1} }}\,dx} 
= \sum_{k = 1}^3 {\sum_{j = 0}^m {a_j{(z_k)} \int_0^1 \frac{{\ln x}}{{(x - z_k )^{j + 1} }}\,dx} }.
\end{equation}
Differentiating~\eqref{eq.taczr9z} $j$ times with respect to $\lambda$ gives
\begin{equation}\label{eq.xhcawq8}
\int_0^1 {\frac{{\ln x}}{{(x - \lambda )^{j + 1} }}\,dx} = \frac{{( - 1)^j }}{j}\sum_{p = 1}^{j - 1} {\frac{1}{{p\lambda ^{j - p} }}\left( {\frac{1}{{(\lambda  - 1)^p }} - \frac{1}{{\lambda ^p }}} \right)}  - \frac{{( - 1)^j }}{{j\lambda ^j }}\ln \left( {\frac{{\lambda  - 1}}{\lambda }} \right)=C_j (\lambda ),\quad j\ne 0,
\end{equation}
which, when utilized in~\eqref{eq.n0a1snj} yields an evaluation of the integral on the right hand side of~\eqref{eq.u0vzry8} 
and hence~\eqref{eq.uhuus5d}.
\end{proof}

\begin{corollary}\label{cor.lofauml}
Let $m$ be a non-negative integer and let $(\lambda_1,\lambda_2,\lambda_3)=(2,-i,i)$ where $i$ is the imaginary unit. Then
\begin{equation}\label{eq.eoko23c}
\sum_{k=0}^\infty \frac{H_{3k+1} - H_{k}}{(3k+1) \binom{3k}{k}} \frac {\binom km}{2^{k + 1}} =(-1)^m\sum_{k=1}^3{\sum_{j=0}^m{a_j{(\lambda_k)}C_j(\lambda_k)}}; 
\end{equation}
where, for $0\le r\le m$, the coefficients are given by
\begin{gather}
a_r{(2)}  = \frac{1}{{(m - r)!}}\left. {\frac{{d^{m - r} }}{{dx^{m - r} }}\,\frac{x^m(1 - x)^{2m}}{{(x^2  + 1)^{m + 1} }}} \right|_{x = 2} ,\label{eq.zufjtr4}\\
a_r{(-i)}  = \frac{1}{{(m - r)!}}\left. {\frac{{d^{m - r} }}{{dx^{m - r} }}\,\frac{x^m(1 - x)^{2m}}{{\left( {(x - 2)(x - i)} \right)^{m + 1} }}} \right|_{x =  - i} ,\\
a_r{(i)}  = \frac{1}{{(m - r)!}}\left. {\frac{{d^{m - r} }}{{dx^{m - r} }}\,\frac{x^m(1 - x)^{2m}}{{\left( {(x - 2)(x + i)} \right)^{m + 1} }}} \right|_{x = i}\label{eq.c1scd26};
\end{gather}
\begin{equation}\label{eq.qdvcic2}
C_0 (2)=\Li_2 \left(\frac12 \right)=\frac{\pi^2}{12} -\frac12\ln^22,\quad C_0(-i)=-\frac{\pi^2}{48} + iG,\quad C_0(i)=-\frac{\pi^2}{48} - iG,\text{ by~\eqref{eq.w9cngvk}};
\end{equation}
and for $1\le r\le m$, $C_r(2)$, $C_r(-i)$ and $C_r(i)$ are found from~\eqref{eq.y3tw4qr}.
\end{corollary}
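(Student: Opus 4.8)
The plan is to obtain Corollary~\ref{cor.lofauml} as the direct specialization of Theorem~\ref{thm.oxop1ki} to $z=2$; since $|z|=2\ge 1$, the hypotheses of the theorem hold. The first task is to locate the three roots of $x(1-x)^2-2=0$. Expanding $x(1-x)^2=x^3-2x^2+x$, this equation reads $x^3-2x^2+x-2=0$; testing $x=2$ gives $8-8+2-2=0$, so $(x-2)$ divides the cubic, and polynomial division yields $x(1-x)^2-2=(x-2)(x^2+1)=(x-2)(x-i)(x+i)$. Thus the distinct roots are exactly $\{2,-i,i\}$, which we label $(\lambda_1,\lambda_2,\lambda_3)=(2,-i,i)$ in agreement with the statement, and \eqref{eq.uhuus5d} immediately produces \eqref{eq.eoko23c}.

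The second step is to simplify the coefficient formulas \eqref{eq.rkhw1l3} at each root. Using the factorization $\left(x(1-x)^2-2\right)^{m+1}=(x-2)^{m+1}(x-i)^{m+1}(x+i)^{m+1}$, the factor $(x-\lambda_k)^{m+1}$ multiplying the numerator in \eqref{eq.rkhw1l3} cancels the corresponding factor of the denominator before differentiation. For $\lambda=2$ this leaves $x^m(1-x)^{2m}/(x^2+1)^{m+1}$, giving \eqref{eq.zufjtr4}; for $\lambda=-i$ the surviving denominator is $\left((x-2)(x-i)\right)^{m+1}$, and for $\lambda=i$ it is $\left((x-2)(x+i)\right)^{m+1}$, which are the two remaining formulas.

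It then remains to record the zeroth-order constants. By \eqref{eq.w9cngvk}, $C_0(\lambda)=\Li_2(1/\lambda)$. For $\lambda=2$ this is $\Li_2(1/2)$, evaluated via \eqref{eq.k4mxnco}. For $\lambda=-i$ one has $1/(-i)=i$, so $C_0(-i)=\Li_2(i)=\Li_2(e^{i\pi/2})=-\pi^2/48+iG$ by \eqref{eq.p9szd4d}; likewise $1/i=-i$ yields $C_0(i)=\Li_2(-i)=\Li_2(e^{-i\pi/2})=-\pi^2/48-iG$ by \eqref{eq.mfthsdb}. Finally, for $1\le r\le m$ the constants $C_r(2)$, $C_r(-i)$, $C_r(i)$ are read directly from \eqref{eq.y3tw4qr}, completing all the terms of \eqref{eq.eoko23c}. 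There is no substantive obstacle, as the corollary is a verbatim instance of the theorem; the only points demanding care are the cancellation of the $(x-\lambda_k)^{m+1}$ factors in deriving \eqref{eq.zufjtr4}--\eqref{eq.c1scd26} and the correct reading of the reciprocals $1/(\pm i)$ when evaluating the $C_0$ terms.
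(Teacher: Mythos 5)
Your proposal is correct and follows exactly the paper's route: the paper's own proof is the single line ``Set $z=2$ in Theorem~\ref{thm.oxop1ki}.'' You have merely made explicit the details the paper leaves implicit --- the factorization $x(1-x)^2-2=(x-2)(x^2+1)$, the cancellation of $(x-\lambda_k)^{m+1}$ in \eqref{eq.rkhw1l3}, and the dilogarithm values giving \eqref{eq.qdvcic2} --- all of which check out.
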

\begin{proof}
Set $z=2$ in Theorem~\ref{thm.oxop1ki}.
\end{proof}

We now list some examples from~\eqref{eq.eoko23c}:
\begin{equation}
\sum_{k = 0}^\infty \frac{{H_{3k + 1} - H_k }}{(3k + 1)\binom {3k}{k}}\frac{k}{{2^{k + 1} }} 
= \frac{{3\pi }}{{100}} - \frac{{3\pi ^2 }}{{400}} + \frac{3}{{25}}\ln 2 + \frac{9}{{250}}\ln^2 2 - \frac{{13}}{{125}}G,
\end{equation}
\begin{equation}
\begin{split}
&\sum_{k = 0}^\infty \frac{{H_{3k + 1} - H_k }}{(3k + 1)\binom {3k}{k}} \frac{{k(k - 1)}}{{2^{k + 2} }} \\ 
&\qquad= \frac{3}{{100}} - \frac{{29\pi }}{{1250}} + \frac{{149\pi^2 }}{{30000}} - \frac{{58}}{{625}}\ln 2 - \frac{{149}}{{6250}}\ln^2 2 + \frac{{243}}{{3125}}G,
\end{split}
\end{equation}
\begin{equation}
\begin{split}
&\sum_{k = 0}^\infty \frac{{H_{3k + 1} - H_k }}{(3k + 1)\binom {3k}{k}} \frac{{\binom{{k}}{3}}}{{2^{k + 1} }} \\ 
&\qquad = - \frac{{13}}{{375}} + \frac{{1529\pi }}{{75000}} - \frac{{577\pi^2 }}{{150000}} + \frac{{752}}{{9375}}\ln 2 
+ \frac{{577}}{{31250}}\ln^2 2 - \frac{{1903}}{{31250}}G.
\end{split}
\end{equation}

\begin{corollary}\label{cor.p7i3dyq}
Let $m$ be a non-negative integer and let \mbox{$(\gamma_1,\gamma_2,\gamma_3)=(-1,(3 + i\sqrt 7)/2,(3 - i\sqrt 7)/2)$} where $i$ is the imaginary unit. Then
\begin{equation}\label{eq.bcdy054}
\sum_{k=0}^\infty \frac{H_{3k+1} - H_{k}}{(3k+1) \binom{3k}{k}}\, \frac {(-1)^k\binom km}{4^{k + 1}} =(-1)^{m + 1}\sum_{k=1}^3{\sum_{j=0}^m{a_j{(\gamma_k)}C_j(\gamma_k)}}; 
\end{equation}
where, for $0\le r\le m$, the coefficients are given by
\begin{gather}
a_r{(\gamma_1)}  = \frac{1}{{(m - r)!}}\left. {\frac{{d^{m - r} }}{{dx^{m - r} }}\,\frac{x^m(1 - x)^{2m}}{{(x^2  - 3x + 4)^{m + 1} }}} \right|_{x = -1} ,\label{eq.w33l0zn}\\
a_r{(\gamma_2)}  = \frac{1}{{(m - r)!}}\left. {\frac{{d^{m - r} }}{{dx^{m - r} }}\,\frac{x^m(1 - x)^{2m}}{{\left( {(x + 1)(x - (3 - i\sqrt 7)/2)} \right)^{m + 1} }}} \right|_{x =  (3 + i\sqrt 7)/2} ,\\
a_r{(\gamma_3)}  = \frac{1}{{(m - r)!}}\left. {\frac{{d^{m - r} }}{{dx^{m - r} }}\,\frac{x^m(1 - x)^{2m}}{{\left( {(x + 1)(x - (3 + i\sqrt 7)/2)} \right)^{m + 1} }}} \right|_{x =  (3 - i\sqrt 7)/2}\label{eq.i2xtucg};
\end{gather}
\begin{equation}\label{eq.uct5elk}
C_0 (\gamma _1 ) = \Li_2 ( - 1) =  - \frac{{\pi ^2 }}{{12}},\quad C_0 (\gamma _2 ) = \Li_2 \left( {\frac{2}{{3 + i\sqrt 7 }}} \right),\quad C_0 (\gamma _3 ) = \Li_2 \left( {\frac{2}{{3 - i\sqrt 7 }}} \right),\text{ by~\eqref{eq.w9cngvk}};
\end{equation}
and for $1\le r\le m$, $C_r(\gamma_1)$, $C_r(\gamma_2)$ and $C_r(\gamma_3)$ are found from~\eqref{eq.y3tw4qr}.
\end{corollary}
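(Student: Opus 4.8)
The plan is to recognize Corollary~\ref{cor.p7i3dyq} as the specialization $z=-4$ of Theorem~\ref{thm.oxop1ki}, in exact analogy with how Corollary~\ref{cor.lofauml} issues from the choice $z=2$. Everything then reduces to substitution; the only genuine bookkeeping is reconciling the alternating factor $(-1)^k/4^{k+1}$ on the left of \eqref{eq.bcdy054} with the reciprocal power $1/z^{k+1}$ that appears in \eqref{eq.u0vzry8} and in Theorem~\ref{thm.oxop1ki}.

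First I would set $z=-4$ (note $|z|=4\ge1$, so the hypothesis of the theorem is met) and observe that $1/z^{k+1}=1/(-4)^{k+1}=-(-1)^k/4^{k+1}$. Hence the left-hand side of \eqref{eq.u0vzry8} equals $-\sum_k\frac{H_{3k+1}-H_k}{(3k+1)\binom{3k}{k}}\frac{(-1)^k\binom km}{4^{k+1}}$, and carrying this extra sign over to the right-hand side converts the prefactor $(-1)^m$ into the $(-1)^{m+1}$ displayed in \eqref{eq.bcdy054}.

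Next I would identify the three roots. A direct expansion gives $x(1-x)^2+4=x^3-2x^2+x+4=(x+1)(x^2-3x+4)$, and the quadratic $x^2-3x+4$ has discriminant $9-16=-7$, so its roots are $(3\pm i\sqrt7)/2$. This yields the three distinct values $(\gamma_1,\gamma_2,\gamma_3)=(-1,(3+i\sqrt7)/2,(3-i\sqrt7)/2)$, none of which lies in $[0,1]$; with $z=-4$ these are exactly the roots $z_1,z_2,z_3$ of $x(1-x)^2-z=0$ required by Theorem~\ref{thm.oxop1ki}.

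The coefficient formulas then follow by cancellation in \eqref{eq.rkhw1l3}. Since $x(1-x)^2+4=(x-\gamma_1)(x-\gamma_2)(x-\gamma_3)$, the numerator factor $(x-\gamma_k)^{m+1}$ cancels the matching factor in the denominator; for $\gamma_1=-1$ this leaves $x^m(1-x)^{2m}/(x^2-3x+4)^{m+1}$ as in \eqref{eq.w33l0zn}, and likewise for $\gamma_2,\gamma_3$ one is left with the quotients in the stated formulas. Finally, \eqref{eq.w9cngvk} gives $C_0(\gamma_k)=\Li_2(1/\gamma_k)$, in particular $C_0(\gamma_1)=\Li_2(-1)=-\pi^2/12$, while the higher coefficients $C_r(\gamma_k)$ for $1\le r\le m$ are read directly from \eqref{eq.y3tw4qr}. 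Because each step is a substitution into established formulas, there is no analytic obstacle here; the only point demanding care is the sign reconciliation of the opening step, with the distinctness of the roots and the condition $\gamma_k\notin[0,1]$ needing only a one-line check to confirm the hypotheses apply.
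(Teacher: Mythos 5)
Your proposal is correct and follows exactly the paper's route: the paper's proof is simply ``Set $z=-4$ in Theorem~\ref{thm.oxop1ki}'' together with the factorization $x(1-x)^2+4=(x+1)(x^2-3x+4)$, which is precisely your argument. Your additional bookkeeping --- the sign flip $1/(-4)^{k+1}=-(-1)^k/4^{k+1}$ producing $(-1)^{m+1}$, the cancellation of $(x-\gamma_k)^{m+1}$ in \eqref{eq.rkhw1l3}, and the check that the roots are distinct and avoid $[0,1]$ --- is all accurate and merely makes explicit what the paper leaves implicit.
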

\begin{proof}
Set $z=-4$ in Theorem~\ref{thm.oxop1ki}. Note that $\gamma_1=-1$, $\gamma_2=(3 + i\sqrt 7)/2$ and $\gamma_3=(3 - i\sqrt 7)/2$ 
are the roots of $x(1 - x)^2 + 4=0$.
\end{proof}

We give one example from~\eqref{eq.bcdy054}. At $m=0$ we have
\begin{equation}\label{eq.z7wnv9j}
\sum_{k = 0}^\infty \frac{{H_{3k + 1} - H_k }}{{\left( {3k + 1} \right)\binom{{3k}}{k}}}\,\frac{{( - 1)^k }}{{4^{k + 1} }} 
= \frac{{\pi ^2 }}{{96}} - \frac{4}{{\sqrt 7 }}\,\Im {\frac{{\Li_2 \left( {\frac{2}{{3 + i\sqrt 7 }}} \right)}}{{5 + i\sqrt 7 }}},
\end{equation}
since~\eqref{eq.bcdy054} at $m=0$ gives
\begin{equation*}
\sum_{k = 0}^\infty \frac{{H_{3k + 1}  - H_k }}{{\left( {3k + 1} \right)\binom{{3k}}{k}}}\frac{{( - 1)^k }}{{4^{k + 1} }} = \frac{{\pi^2 }}{{96}} + \frac{{2i}}{{\sqrt 7 }}\frac{{\Li_2 \left( {2/\left( {3 + i\sqrt 7 } \right)} \right)}}{{5 + i\sqrt 7 }} - \frac{{2i}}{{\sqrt 7 }}\frac{{\Li_2 \left( {2/\left( {3 - i\sqrt 7 } \right)} \right)}}{{5 - i\sqrt 7 }};
\end{equation*}
the right hand side of which can be simplified using
\begin{equation*}
fg + f^* g^* = 2\,\Re f\,\Re g - 2\,\Im f\, \Im g,
\end{equation*}
for arbitrary functions $f$ and $g$. \\

Differentiating~\eqref{eq.gzvmjym} $m$ times with respect to $z$ gives
\begin{equation}\label{eq.kjlniph}
\sum_{k=0}^\infty \frac{H_{3k+1} - H_{k}}{(3k+1) \binom{3k}{k}} \frac {\binom{k + m}k}{z^{k + m + 1}} =(-1)^m\int_0^1 {\frac{{\ln x}}{\left(x(1 - x)^2  - z\right)^{m + 1}}dx},\quad |z|\ge 1. 
\end{equation}
Note that~\eqref{eq.kjlniph} holds for every real number $m$ that is not a negative integer.

Setting $z=2$ in~\eqref{eq.kjlniph} gives
\begin{equation}\label{eq.aumtswz}
\sum_{k=0}^\infty \frac{H_{3k+1} - H_{k}}{(3k+1) \binom{3k}{k}} \frac {\binom{k + m}k}{2^{k + m + 1}} 
= (-1)^m\int_0^1 \frac{{\ln x}}{((x - 2)(x^2 + 1))^{m + 1}}dx. 
\end{equation}

\begin{theorem}\label{thm.mtaaocu}
Let $m$ be a non-negative integer and let $z_1$, $z_2$ and $z_3$ be the distinct roots of $x(1 - x)^2-z=0$ where $z$ is a real number 
with $|z|\ge 1$. Then
\begin{equation}\label{eq.fbxhg42}
\sum_{k=0}^\infty \frac{H_{3k+1} - H_{k}}{(3k+1) \binom{3k}{k}} \frac {\binom{k + m}k}{z^{k + m + 1}} 
= (-1)^m \sum_{k=1}^3 {\sum_{j=0}^m {b_j{(z_k)}C_j(z_k)}}; 
\end{equation}
where, for $0\le r\le m$,
\begin{equation}\label{eq.tp25jkt}
b_r{(\lambda)} = \frac{1}{{(m - r)!}}\left. {\frac{{d^{m - r} }}{{dx^{m - r} }}\frac{(x - \lambda)^{m + 1}}{{\left(x(1 - x)^2 - z\right)^{m + 1} }}} \right|_{x = \lambda} ,
\end{equation}
and $C_r(z_1)$, $C_r(z_2)$ and $C_r(z_3)$ are calculated using~\eqref{eq.w9cngvk} and~\eqref{eq.y3tw4qr}.
\end{theorem}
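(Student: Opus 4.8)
The plan is to follow, almost verbatim, the argument already used for Theorem~\ref{thm.oxop1ki}; the sole structural change is that the rational function we expand in partial fractions now carries the numerator $1$ rather than $x^m(1-x)^{2m}$. I would begin from the integral representation~\eqref{eq.kjlniph}, which for a non-negative integer $m$ reads
\begin{equation*}
\sum_{k=0}^\infty \frac{H_{3k+1} - H_{k}}{(3k+1) \binom{3k}{k}} \frac {\binom{k + m}k}{z^{k + m + 1}} =(-1)^m\int_0^1 {\frac{{\ln x}}{\left(x(1 - x)^2  - z\right)^{m + 1}}dx},
\end{equation*}
and reduce the evaluation of the integral on the right to the building blocks $C_j(\lambda)$ supplied by the first lemma and its derivatives.

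First I would note that $x(1-x)^2 - z = x^3 - 2x^2 + x - z$ is monic, so with its three distinct roots $z_1,z_2,z_3$ one has $\left(x(1-x)^2-z\right)^{m+1}=\prod_{k=1}^3 (x-z_k)^{m+1}$. Because the numerator $1$ has degree strictly below that of the denominator, the rational function is proper and admits the partial fraction expansion
\begin{equation*}
\frac{1}{\left(x(1-x)^2-z\right)^{m+1}} = \sum_{k=1}^3 \sum_{j=0}^m \frac{b_j(z_k)}{(x-z_k)^{j+1}},
\end{equation*}
where the coefficient of the order-$(j+1)$ pole at $x=\lambda\in\{z_1,z_2,z_3\}$ is recovered by the standard Taylor-coefficient formula, namely~\eqref{eq.tp25jkt}. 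This is exactly the point that separates the present computation from that of Theorem~\ref{thm.oxop1ki}: the absence of the factor $x^m(1-x)^{2m}$ in the numerator is precisely what turns $a_r(\lambda)$ into $b_r(\lambda)$.

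Having the decomposition in hand, I would multiply through by $\ln x$ and integrate the finite sum termwise over $[0,1]$, obtaining
\begin{equation*}
\int_0^1 \frac{\ln x}{\left(x(1-x)^2-z\right)^{m+1}}\,dx = \sum_{k=1}^3 \sum_{j=0}^m b_j(z_k) \int_0^1 \frac{\ln x}{(x-z_k)^{j+1}}\,dx.
\end{equation*}
Each inner integral is precisely $C_j(z_k)$: the term $j=0$ is handled by~\eqref{eq.w9cngvk} (equivalently~\eqref{eq.taczr9z}), while the terms $1\le j\le m$ follow from~\eqref{eq.xhcawq8}, which was produced by differentiating~\eqref{eq.taczr9z} $j$ times in $\lambda$. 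Substituting back and appealing to~\eqref{eq.kjlniph} then yields~\eqref{eq.fbxhg42}. The calculation is routine once the expansion is set up; the only points requiring care are the separate treatment of the $j=0$ term, since~\eqref{eq.y3tw4qr} is asserted only for $j\neq0$, and the legitimacy of termwise integration. The latter is unproblematic here: since $x(1-x)^2\in[0,4/27]$ on $[0,1]$ while $|z|\ge1$, none of the roots $z_k$ can lie in $[0,1]$, so each integrand is integrable (the sole singularity being the integrable $\ln x$ at the origin) and the hypothesis $\lambda\notin[0,1]$ needed for $C_j(\lambda)$ is met automatically.
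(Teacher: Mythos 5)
Your proposal is correct and follows essentially the same route as the paper's own proof: the partial fraction expansion of $1/\left(x(1-x)^2-z\right)^{m+1}$ with coefficients \eqref{eq.tp25jkt}, followed by termwise integration against $\ln x$ using \eqref{eq.w9cngvk} and \eqref{eq.xhcawq8} to produce the $C_j(z_k)$. Your added observations --- that $x(1-x)^2\in[0,4/27]$ on $[0,1]$ forces the roots off the interval of integration, and that the $j=0$ term needs \eqref{eq.w9cngvk} separately --- are correct refinements the paper leaves implicit, not a different argument.
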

\begin{proof}
Consider the partial fraction decomposition
\begin{equation}
\begin{split}
\frac{1}{{\left( {x(1 - x)^2 - z} \right)^{m + 1} }} &= \sum_{j = 0}^m {\left( {\frac{{b_j{(z_1)} }}{{(x - z_1)^{j + 1} }} + \frac{{b_j{(z_2)} }}{{(x - z_2)^{j + 1} }} + \frac{{b_j{(z_3)} }}{{(x - z_3)^{j + 1} }}} \right)}\\ 
& = \sum_{k = 1}^3 {\sum_{j = 0}^m {\frac{{b_j{(z_k)} }}{{(x - z_k )^{j + 1} }}} } ,
\end{split}
\end{equation}
where, for $0\le r\le m$, the coefficients $b_r{(z_1)}$, $b_r{(z_2)}$, $b_r{(z_3)}$ are found from~\eqref{eq.tp25jkt}.
We therefore have
\begin{equation}\label{eq.l0f4k5d}
\begin{split}
\int_0^1 {\frac{{\ln x}}{{\left( {x(1 - x)^2 - z} \right)^{m + 1} }}\,dx} & = \sum_{k = 1}^3 {\sum_{j = 0}^m {b_j{(z_k)} \int_0^1 {\frac{{\ln x}}{{(x - z_k )^{j + 1} }}\,dx} } }\\
&=\sum_{k=1}^3{\sum_{j=0}^m{b_j{(z_k)}C_j(z_k)}},\text{ by~\eqref{eq.xhcawq8}};
\end{split}
\end{equation}
and hence~\eqref{eq.fbxhg42}.
\end{proof}

\begin{corollary}\label{cor.ldjoyyj}
Let $m$ be a non-negative integer and let $(\lambda_1,\lambda_2,\lambda_3)=(2,-i,i)$ where $i$ is the imaginary unit. Then
\begin{equation}\label{eq.pe6wz88}
\sum_{k=0}^\infty \frac{H_{3k+1} - H_{k}}{(3k+1) \binom{3k}{k}} \frac {\binom{k + m}k}{2^{k + m + 1}} =(-1)^m\sum_{k=1}^3{\sum_{j=0}^m{b_j{(\lambda_k)}C_j(\lambda_k)}}; 
\end{equation}
where, for $0\le r\le m$, the coefficients are given by
\begin{gather}
b_r{(2)} = \frac{1}{{(m - r)!}}\left. {\frac{{d^{m - r} }}{{dx^{m - r} }}\frac{1}{{(x^2 + 1)^{m + 1} }}} \right|_{x = 2} ,\label{eq.xaijx0y} \\
b_r{(-i)} = \frac{1}{{(m - r)!}}\left. {\frac{{d^{m - r} }}{{dx^{m - r}}}\frac{1}{{\left({(x - 2)(x - i)} \right)^{m + 1}}}} \right|_{x = -i},\\
b_r{(i)}  = \frac{1}{{(m - r)!}}\left. {\frac{{d^{m - r} }}{{dx^{m - r} }}\frac{1}{{\left( {(x - 2)(x + i)} \right)^{m + 1} }}} \right|_{x = i}\label{eq.zx7vo7d};
\end{gather}
and $C_r(2)$, $C_r(-i)$ and $C_r(i)$ can be readily obtained from~\eqref{eq.w9cngvk} and~\eqref{eq.y3tw4qr}.
\end{corollary}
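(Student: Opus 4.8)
The plan is to obtain this corollary as the direct specialization $z=2$ of Theorem~\ref{thm.mtaaocu}. First I would verify that the cubic $x(1-x)^2-z$ factors nicely at $z=2$: expanding gives $x(1-x)^2-2=x^3-2x^2+x-2$, which groups as $x^2(x-2)+(x-2)=(x-2)(x^2+1)=(x-2)(x-i)(x+i)$. Hence the three roots are $z_1=2$, $z_2=-i$, $z_3=i$; they are distinct and none lies in $[0,1]$, so both Theorem~\ref{thm.mtaaocu} and the formula~\eqref{eq.y3tw4qr} for the $C_j$ apply, and moreover $|z|=2\ge 1$ as required.

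Next I would substitute these roots into the general coefficient formula~\eqref{eq.tp25jkt}. The key simplification is that the numerator factor $(x-\lambda)^{m+1}$ cancels exactly one of the three linear factors in the denominator $\big((x-2)(x-i)(x+i)\big)^{m+1}$. Taking $\lambda=2$ removes the $(x-2)^{m+1}$ factor and leaves $1/(x^2+1)^{m+1}$, giving~\eqref{eq.xaijx0y}; taking $\lambda=-i$ removes $(x+i)^{m+1}$ and leaves $1/\big((x-2)(x-i)\big)^{m+1}$; and taking $\lambda=i$ removes $(x-i)^{m+1}$ and leaves $1/\big((x-2)(x+i)\big)^{m+1}$, which is~\eqref{eq.zx7vo7d}. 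The values $C_j(2)$, $C_j(-i)$, $C_j(i)$ are then read off directly from~\eqref{eq.w9cngvk} and~\eqref{eq.y3tw4qr} with $\lambda\in\{2,-i,i\}$, and plugging $(\lambda_1,\lambda_2,\lambda_3)=(2,-i,i)$ into~\eqref{eq.fbxhg42} yields~\eqref{eq.pe6wz88}.

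Since every ingredient is already furnished by Theorem~\ref{thm.mtaaocu}, there is essentially no obstacle: the only points requiring care are the elementary factorization of the cubic and the observation that the cancellation of $(x-\lambda)^{m+1}$ against the denominator produces precisely the stated reduced forms~\eqref{eq.xaijx0y}--\eqref{eq.zx7vo7d}. This mirrors the one-line proofs of Corollaries~\ref{cor.lofauml} and~\ref{cor.p7i3dyq}, which specialize the companion Theorem~\ref{thm.oxop1ki} at $z=2$ and $z=-4$ respectively.
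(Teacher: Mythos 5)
Your proposal is correct and follows exactly the paper's route: the paper's proof is the single line ``Set $z=2$ in Theorem~\ref{thm.mtaaocu},'' and you carry out the same specialization, merely making explicit the routine verifications (the factorization $x(1-x)^2-2=(x-2)(x^2+1)$ and the cancellation of $(x-\lambda)^{m+1}$ in~\eqref{eq.tp25jkt}) that the paper leaves implicit.
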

\begin{proof}
Set $z=2$ in Theorem~\ref{thm.mtaaocu}.
\end{proof}

Here are a couple of evaluations using~\eqref{eq.pe6wz88}:
\begin{gather}
\sum_{k = 0}^\infty \frac{{H_{3k + 1} - H_k}}{(3k + 1)\binom {3k}{k}}\,\frac{{k + 1}}{{2^{k + 2} }} 
= \frac{{3\pi }}{{200}} + \frac{{\pi^2 }}{{150}} + \frac{3}{{50}}\ln 2 - \frac{4}{{125}}\ln^2 2 + \frac{{37G}}{{250}},\\
\sum_{k = 0}^\infty \frac{{H_{3k + 1} - H_k }}{(3k + 1)\binom {3k}{k}}\,\frac{{(k + 1)(k + 2)}}{{2^{k + 4} }} 
= \frac{3}{{400}} + \frac{{23\pi }}{{2500}} + \frac{{27\pi^2 }}{{10000}} + \frac{{23}}{{625}}\ln 2 - \frac{{81}}{{6250}}\ln^2 2 
+ \frac{{843G}}{{12500}},
\end{gather}
\begin{equation}
\begin{split}
&\sum_{k = 0}^\infty \frac{{H_{3k + 1} - H_k }}{(3k + 1)\binom {3k}{k}} \frac{{\binom{{k + 3}}{k}}}{{2^{k + 4} }} \\
&\qquad = \frac{{83}}{{12000}} + \frac{{3059\pi }}{{600000}} + \frac{{11\pi^2 }}{{9375}} + \frac{{1517}}{{75000}}\ln 2 
- \frac{{88}}{{15625}}\ln^2 2 + \frac{{8137G}}{{250000}}.
\end{split}
\end{equation}

\begin{corollary}\label{cor.a55j4z8}
Let $m$ be a non-negative integer and let \mbox{$(\gamma_1,\gamma_2,\gamma_3)=(-1,(3 + i\sqrt 7)/2,(3 - i\sqrt 7)/2)$} where $i$ is the imaginary unit. Then
\begin{equation}\label{eq.m3tjr6i}
\sum_{k=0}^\infty \frac{H_{3k+1} - H_{k}}{(3k+1) \binom{3k}{k}}\, \frac {(-1)^k\binom {k + m}k}{4^{k + m + 1}} =-\sum_{k=1}^3{\sum_{j=0}^m{b_j{(\gamma_k)}C_j(\gamma_k)}}; 
\end{equation}
where, for $0\le r\le m$, the coefficients are given by
\begin{gather}
b_r{(\gamma_1)}  = \frac{1}{{(m - r)!}}\left. {\frac{{d^{m - r} }}{{dx^{m - r} }}\,\frac{1}{{(x^2  - 3x + 4)^{m + 1} }}} \right|_{x = -1} ,\label{eq.pypuqrd}\\
b_r{(\gamma_2)}  = \frac{1}{{(m - r)!}}\left. {\frac{{d^{m - r} }}{{dx^{m - r} }}\,\frac{1}{{\left( {(x + 1)(x - (3 - i\sqrt 7)/2)} \right)^{m + 1} }}} \right|_{x =  (3 + i\sqrt 7)/2} ,\\
b_r{(\gamma_3)}  = \frac{1}{{(m - r)!}}\left. {\frac{{d^{m - r} }}{{dx^{m - r} }}\,\frac{1}{{\left( {(x + 1)(x - (3 + i\sqrt 7)/2)} \right)^{m + 1} }}} \right|_{x =  (3 - i\sqrt 7)/2}\label{eq.xn84iwk};
\end{gather}
\begin{equation}\label{eq.g3p0uxm}
C_0 (\gamma _1 ) = \Li_2 ( - 1) =  - \frac{{\pi ^2 }}{{12}},\quad C_0 (\gamma _2 ) = \Li_2 \left( {\frac{2}{{3 + i\sqrt 7 }}} \right),\quad C_0 (\gamma _3 ) = \Li_2 \left( {\frac{2}{{3 - i\sqrt 7 }}} \right),\text{ by~\eqref{eq.w9cngvk}};
\end{equation}
and for $1\le r\le m$, $C_r(\gamma_1)$, $C_r(\gamma_2)$ and $C_r(\gamma_3)$ are found from~\eqref{eq.y3tw4qr}.
\end{corollary}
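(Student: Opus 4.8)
The plan is to specialize Theorem~\ref{thm.mtaaocu} to the value $z=-4$, exactly as Corollary~\ref{cor.ldjoyyj} specializes it to $z=2$. First I would confirm that the cubic $x(1-x)^2-z$ factors correctly at $z=-4$: expanding gives $x(1-x)^2+4 = x^3-2x^2+x+4$, and since $x=-1$ is visibly a root, polynomial division yields $x^3-2x^2+x+4=(x+1)(x^2-3x+4)$. The quadratic $x^2-3x+4$ has discriminant $9-16=-7$, so its roots are $(3\pm i\sqrt 7)/2$. Hence the three distinct roots of $x(1-x)^2+4=0$ are precisely $\gamma_1=-1$, $\gamma_2=(3+i\sqrt 7)/2$, $\gamma_3=(3-i\sqrt 7)/2$, as claimed.

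Next I would track the sign introduced by the negative base $z=-4$. In Theorem~\ref{thm.mtaaocu} the left-hand side of~\eqref{eq.fbxhg42} carries the weight $1/z^{k+m+1}$; with $z=-4$ this equals $(-1)^{k+m+1}/4^{k+m+1}=(-1)^{m+1}(-1)^k/4^{k+m+1}$. Pulling the constant $(-1)^{m+1}$ out of the series and matching it against the factor $(-1)^m$ already present on the right-hand side of~\eqref{eq.fbxhg42} leaves an overall multiplier of $(-1)^{2m+1}=-1$ in front of the double sum. This accounts for the single minus sign appearing in~\eqref{eq.m3tjr6i}, in contrast to the $(-1)^m$ of the $z=2$ case.

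Finally I would record the simplified coefficient formulas. For each root $\gamma_k$ the numerator factor $(x-\gamma_k)^{m+1}$ in~\eqref{eq.tp25jkt} cancels the matching factor of the denominator $\big((x+1)(x^2-3x+4)\big)^{m+1}$, leaving the explicit expressions~\eqref{eq.pypuqrd}--\eqref{eq.xn84iwk}: at the real root $\gamma_1=-1$ the surviving denominator is $(x^2-3x+4)^{m+1}$, while at each complex root the surviving denominator retains $(x+1)^{m+1}$ together with the linear factor coming from the conjugate complex root. The base values $C_0(\gamma_k)$ follow from~\eqref{eq.w9cngvk}, with $C_0(\gamma_1)=\Li_2(-1)=-\pi^2/12$, and the higher coefficients $C_r(\gamma_k)$ for $1\le r\le m$ from~\eqref{eq.y3tw4qr}.

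There is no substantive obstacle here, since the statement is a direct specialization of an already-proved theorem. The only points demanding care are the bookkeeping of the extra sign arising from $z=-4<0$ and the verification that the stated triple of roots genuinely solves $x(1-x)^2+4=0$; both are elementary.
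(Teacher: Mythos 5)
Your proposal is correct and follows exactly the paper's route: the paper's proof is simply ``Theorem~\ref{thm.mtaaocu} with $z=-4$,'' and you carry out the same specialization, merely making explicit the details (factorization $x(1-x)^2+4=(x+1)(x^2-3x+4)$, the sign bookkeeping from $1/(-4)^{k+m+1}=(-1)^{m+1}(-1)^k/4^{k+m+1}$, and the cancellation in~\eqref{eq.tp25jkt}) that the paper leaves implicit. No gaps; your sign analysis correctly yields the single minus sign in~\eqref{eq.m3tjr6i}.
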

\begin{proof}
Theorem~\ref{thm.mtaaocu} with $z=-4$.
\end{proof}

\section{Main results, Part 2}

This section deals with the second category of series, i.e., series of the form
\begin{equation}\label{eq.wl9np1c}
\sum_{k=0}^\infty \frac{H_{2k} - H_{k}}{(3k+1) \binom{3k}{k}} z^k = - \int_0^1 \frac{\ln\left (\frac{x}{1-x}\right )}{1-zx(1-x)^{2}} dx.
\end{equation}
Differentiating~\eqref{eq.wl9np1c} $m$ times with respect to $z$ and thereafter replacing $z$ with $1/z$ gives
\begin{equation*}
\sum_{k = 0}^\infty  {\frac{{H_{2k}  - H_k }}{{(3k + 1)\binom{{3k}}{k}}}\frac{{\binom{{k}}{m}}}{{z^{k + 1} }}}  =( - 1)^m \int_0^1 {\frac{{x^m (1 - x)^{2m} \ln (x/(1 - x))}}{{\left( {x(1 - x)^2  - z} \right)^{m + 1} }}}\,dx.
\end{equation*}

\begin{theorem}\label{thm.uok2s8e}
Let $m$ be a non-negative integer and let $z_1$, $z_2$ and $z_3$ be the distinct roots of $x(1 - x)^2-z=0$ where $z$ is a real number such that $|z|\ge1$. Then
\begin{equation}
\sum_{k=0}^\infty \frac{H_{2k} - H_{k}}{(3k+1) \binom{3k}{k}} \frac {\binom km}{z^{k + 1}} =(-1)^m\sum_{k=1}^3{\sum_{j=0}^m{a_j{(z_k)}\left(C_j(z_k) + (-1)^jC_j(1 - z_k)\right)}},
\end{equation}
where $a_r{(z_1)}$, $a_r{(z_2)}$, $a_r{(z_3)}$ are as defined in Theorem~\ref{thm.oxop1ki} and $C_0$ and $C_r$ can be found from~\eqref{eq.w9cngvk} and~\eqref{eq.y3tw4qr} in Theorem~\ref{thm.oxop1ki}.
\end{theorem}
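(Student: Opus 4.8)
The plan is to run the proof of Theorem~\ref{thm.oxop1ki} almost verbatim, the only genuine change being the replacement of $\ln x$ by $\ln(x/(1-x))$ in the integrand. I would begin from the integral representation displayed immediately before the statement,
\begin{equation*}
\sum_{k=0}^\infty \frac{H_{2k}-H_k}{(3k+1)\binom{3k}{k}}\frac{\binom km}{z^{k+1}} = (-1)^m \int_0^1 \frac{x^m(1-x)^{2m}\,\ln(x/(1-x))}{\left(x(1-x)^2-z\right)^{m+1}}\,dx,
\end{equation*}
and insert the same partial fraction decomposition used in Theorem~\ref{thm.oxop1ki}, namely
\begin{equation*}
\frac{x^m(1-x)^{2m}}{\left(x(1-x)^2-z\right)^{m+1}} = \sum_{k=1}^3\sum_{j=0}^m \frac{a_j(z_k)}{(x-z_k)^{j+1}},
\end{equation*}
with coefficients $a_j(z_k)$ given by~\eqref{eq.rkhw1l3}. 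This reduces the task to evaluating, for each root $\lambda=z_k$, the integral $\int_0^1 \ln(x/(1-x))\,(x-\lambda)^{-(j+1)}\,dx$.

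The one new computation is this integral, and it is where the combination $C_j(\lambda)+(-1)^j C_j(1-\lambda)$ is produced. Splitting $\ln(x/(1-x))=\ln x-\ln(1-x)$, the first term contributes $\int_0^1 \ln x\,(x-\lambda)^{-(j+1)}\,dx = C_j(\lambda)$ by~\eqref{eq.xhcawq8} (and by~\eqref{eq.w9cngvk} when $j=0$). For the second term I would substitute $u=1-x$; since $x-\lambda=-(u-(1-\lambda))$ and $dx=-du$, one finds
\begin{equation*}
\int_0^1 \frac{\ln(1-x)}{(x-\lambda)^{j+1}}\,dx = (-1)^{j+1}\int_0^1 \frac{\ln u}{(u-(1-\lambda))^{j+1}}\,du = (-1)^{j+1}C_j(1-\lambda).
\end{equation*}
Subtracting and using $-(-1)^{j+1}=(-1)^j$ gives $\int_0^1 \ln(x/(1-x))\,(x-\lambda)^{-(j+1)}\,dx = C_j(\lambda)+(-1)^j C_j(1-\lambda)$, which for $j=0$ is consistent with~\eqref{eq.yqwhh2z} via the reflection $\Li_2(1/\lambda)+\Li_2(1/(1-\lambda))$. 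Substituting this back into the double sum produces the claimed right-hand side.

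Two points deserve care rather than genuine difficulty. First, $C_j(1-\lambda)$ must be well defined, i.e.\ $1-z_k\notin[0,1]$; this follows because $1-z_k\in[0,1]\iff z_k\in[0,1]$, and no root can lie in $[0,1]$, since there $x(1-x)^2\le 4/27<1\le|z|$ while $z_k$ is either non-real or a real root with $|z|\ge1$. Second, the bookkeeping of the sign $(-1)^{j+1}$ arising from $(x-\lambda)^{j+1}=(-1)^{j+1}(u-(1-\lambda))^{j+1}$ together with the reversal of the limits of integration must be handled carefully, as it is precisely this sign that turns the reflection contribution into $(-1)^j C_j(1-\lambda)$. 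Beyond these, the argument is routine given Theorem~\ref{thm.oxop1ki}, so I expect no substantial obstacle.
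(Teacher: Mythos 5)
Your proposal is correct and follows essentially the same route as the paper: the same partial fraction decomposition from Theorem~\ref{thm.oxop1ki}, with the single new ingredient being the identity $\int_0^1 \ln(x/(1-x))\,(x-\lambda)^{-(j+1)}\,dx = C_j(\lambda)+(-1)^j C_j(1-\lambda)$, which the paper also obtains (implicitly via the same substitution $u=1-x$) as the entire content of its proof. Your additional verification that $1-z_k\notin[0,1]$, using $x(1-x)^2\le 4/27<1\le|z|$ on $[0,1]$, is a detail the paper leaves tacit but is sound.
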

\begin{proof}
The proof is similar to that of Theorem~\ref{thm.oxop1ki}. Note that
\begin{equation}
\begin{split}
\int_0^1 {\frac{{\ln (x/(1 - x))}}{{(x - \lambda )^{r + 1} }}\,dx}  &= \int_0^1 {\frac{{\ln x}}{{(x - \lambda )^{r + 1} }}\,dx}  + ( - 1)^r \int_0^1 {\frac{{\ln x}}{{(x - (1 - \lambda ))^{r + 1} }}\,dx}\\
&=C_r(\lambda) + (-1)^rC_r(1 - \lambda).
\end{split}
\end{equation}
\end{proof}
\begin{corollary}\label{cor.dw0vcc8}
Let $m$ be a non-negative integer and let $(\lambda_1,\lambda_2,\lambda_3)=(2,-i,i)$ where $i$ is the imaginary unit. Then
\begin{equation}\label{eq.q5kmtd7}
\sum_{k=0}^\infty \frac{H_{2k} - H_{k}}{(3k+1) \binom{3k}{k}} \frac {\binom km}{2^{k + 1}} =(-1)^m\sum_{k=1}^3{\sum_{j=0}^m{a_j{(\lambda_k)}\left(C_j(\lambda_k) + (-1)^jC_j(1 - \lambda_k)\right)}},
\end{equation}
where $a_r{(2)}$, $a_r{(-i)}$, $a_r{(i)}$ are as defined in \eqref{eq.zufjtr4}--\eqref{eq.c1scd26} in Corollary~\ref{cor.lofauml} and $C_r$ are found from~\eqref{eq.w9cngvk} and~\eqref{eq.y3tw4qr}.
\end{corollary}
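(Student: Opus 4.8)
The plan is to obtain this statement as the special case $z=2$ of Theorem~\ref{thm.uok2s8e}, exactly mirroring how Corollary~\ref{cor.lofauml} was deduced from Theorem~\ref{thm.oxop1ki}. First I would check that $z=2$ satisfies the hypothesis $|z|\ge 1$ and produces three distinct roots of $x(1-x)^2-z=0$. Expanding, $x(1-x)^2-2=x^3-2x^2+x-2=(x-2)(x^2+1)=(x-2)(x-i)(x+i)$, so the roots are precisely $(z_1,z_2,z_3)=(2,-i,i)=(\lambda_1,\lambda_2,\lambda_3)$, as stated, and they are distinct.

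Next I would verify that the coefficients $a_r(\lambda_k)$ inherited from Theorem~\ref{thm.oxop1ki} collapse to the three displays \eqref{eq.zufjtr4}--\eqref{eq.c1scd26}. In the general formula \eqref{eq.rkhw1l3} the numerator carries a factor $(x-\lambda)^{m+1}$ while the denominator is $\left(x(1-x)^2-z\right)^{m+1}=\left((x-2)(x-i)(x+i)\right)^{m+1}$; taking $\lambda$ equal to one of the three roots cancels exactly one linear factor raised to the power $m+1$. For $\lambda=2$ this leaves $x^m(1-x)^{2m}/(x^2+1)^{m+1}$; for $\lambda=-i$ it leaves $x^m(1-x)^{2m}/((x-2)(x-i))^{m+1}$; and for $\lambda=i$ it leaves $x^m(1-x)^{2m}/((x-2)(x+i))^{m+1}$. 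These agree term-for-term with \eqref{eq.zufjtr4} and the two subsequent displays of Corollary~\ref{cor.lofauml}, so no fresh computation of the $a_r$ is required.

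With the roots and coefficients matched, substituting $z=2$ into Theorem~\ref{thm.uok2s8e} turns its left-hand side into the series $\sum_{k\ge 0}\frac{H_{2k}-H_k}{(3k+1)\binom{3k}{k}}\frac{\binom{k}{m}}{2^{k+1}}$ and its right-hand side into $(-1)^m\sum_{k=1}^3\sum_{j=0}^m a_j(\lambda_k)\bigl(C_j(\lambda_k)+(-1)^jC_j(1-\lambda_k)\bigr)$, which is exactly \eqref{eq.q5kmtd7}. The values $C_j(\lambda_k)$ and $C_j(1-\lambda_k)$ needed to render the result fully explicit are then supplied by \eqref{eq.w9cngvk} and \eqref{eq.y3tw4qr}, as the statement indicates.

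There is no substantive obstacle: the whole argument rests on the elementary factorization of the cubic at $z=2$, already exploited in Part~1. The single point deserving care is the bookkeeping of the extra summand $(-1)^jC_j(1-\lambda_k)$, which is present here but absent in the Part~1 corollaries. This term is precisely what encodes the $\ln(x/(1-x))$ appearing in the numerator of the integral representation \eqref{eq.wl9np1c} for the $H_{2k}-H_k$ series; I would simply confirm that each argument $1-\lambda_k\in\{-1,\,1+i,\,1-i\}$ avoids the interval $[0,1]$, so that \eqref{eq.w9cngvk} and \eqref{eq.y3tw4qr} apply to $C_j(1-\lambda_k)$ exactly as they do to $C_j(\lambda_k)$, and the specialization goes through unchanged.
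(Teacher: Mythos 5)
Your proposal is correct and coincides with the paper's own proof, which simply invokes Theorem~\ref{thm.uok2s8e} with $z=2$; your added verifications (the factorization $x(1-x)^2-2=(x-2)(x-i)(x+i)$, the matching of the $a_r$ coefficients with \eqref{eq.zufjtr4}--\eqref{eq.c1scd26}, and the check that $1-\lambda_k\in\{-1,1+i,1-i\}$ avoids $[0,1]$) are exactly the bookkeeping the paper leaves implicit.
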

\begin{proof}
Theorem~\ref{thm.uok2s8e} with $z=2$.
\end{proof}
Examples from Corollary~\ref{cor.dw0vcc8} include
\begin{equation}
\sum_{k = 0}^\infty  {\frac{{H_{2k}  - H_k }}{{(3k + 1)\binom{{3k}}{k}}}\,\frac{k}{{2^{k + 1} }}}  =  - \frac{\pi }{{200}} + \frac{{9\pi ^2 }}{{4000}} + \frac{3}{{100}}\ln 2 + \frac{{27}}{{1000}}\ln ^2 2 - \frac{{13}}{{1000}}\pi \ln 2,
\end{equation}
\begin{equation}
\begin{split}
&\sum_{k = 0}^\infty  {\frac{{H_{2k}  - H_k }}{{(3k + 1)\binom{{3k}}{k}}}\,\frac{{k(k - 1)}}{{2^{k + 2} }}}\\ 
&\qquad= \frac{{2\pi }}{{625}} - \frac{{149\pi ^2 }}{{100000}} - \frac{{51}}{{5000}}\ln 2 - \frac{{447}}{{25000}}\ln ^2 2 + \frac{{243}}{{25000}}\pi \ln 2.
\end{split}
\end{equation}
\begin{corollary}\label{cor.fdvgnlc}
Let $m$ be a non-negative integer and let \mbox{$(\gamma_1,\gamma_2,\gamma_3)=(-1,(3 + i\sqrt 7)/2,(3 - i\sqrt 7)/2)$} where $i$ is the imaginary unit. Then
\begin{equation}\label{eq.hwg40y4}
\sum_{k=0}^\infty \frac{H_{2k} - H_{k}}{(3k+1) \binom{3k}{k}}\, \frac {(-1)^k\binom km}{4^{k + 1}} =(-1)^{m + 1}\sum_{k=1}^3{\sum_{j=0}^m{a_j{(\gamma_k)}\left(C_j(\gamma_k) + (-1)^jC_j(1 - \gamma_k)\right)}},
\end{equation}
where $a_r{(\gamma_1)}$, $a_r{(\gamma_2)}$, $a_r{(\gamma_3)}$ are as defined in \eqref{eq.w33l0zn}--\eqref{eq.i2xtucg} in Corollary~\ref{cor.p7i3dyq} and $C_r$ are found from~\eqref{eq.w9cngvk} and~\eqref{eq.y3tw4qr}.
\end{corollary}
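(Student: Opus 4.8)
The plan is to specialize Theorem~\ref{thm.uok2s8e} to $z=-4$, exactly as Corollary~\ref{cor.a55j4z8} specializes Theorem~\ref{thm.mtaaocu} and Corollary~\ref{cor.p7i3dyq} specializes Theorem~\ref{thm.oxop1ki} to the same value of $z$. First I would record that for $z=-4$ the cubic $x(1-x)^2-z=0$ becomes $x(1-x)^2+4=0$, whose three distinct roots are precisely $\gamma_1=-1$, $\gamma_2=(3+i\sqrt 7)/2$ and $\gamma_3=(3-i\sqrt 7)/2$, as already noted in Corollary~\ref{cor.p7i3dyq}. Since the partial-fraction coefficients $a_r(\lambda)$ of~\eqref{eq.rkhw1l3} depend only on $m$ and on the root $\lambda$---and not on whether the integrand carries the factor $\ln x$ or $\ln(x/(1-x))$, both cases resting on the identical expansion of $x^m(1-x)^{2m}/(x(1-x)^2-z)^{m+1}$---the quantities $a_r(\gamma_k)$ needed here are literally those already displayed in~\eqref{eq.w33l0zn}--\eqref{eq.i2xtucg}.

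Next I would verify that the hypotheses of Theorem~\ref{thm.uok2s8e} hold: $z=-4$ is real with $|z|=4\ge 1$, the three roots are distinct, and---crucially for the $C_j$ terms---each $\gamma_k$ lies outside $[0,1]$ (one root is $-1$, the other two are non-real), whence $1-\gamma_k\notin[0,1]$ as well, so every $C_j(\gamma_k)$ and $C_j(1-\gamma_k)$ is well-defined via~\eqref{eq.w9cngvk} and~\eqref{eq.y3tw4qr}. Substituting $z=-4$ into the conclusion of Theorem~\ref{thm.uok2s8e} then yields
\begin{equation*}
\sum_{k=0}^\infty \frac{H_{2k}-H_k}{(3k+1)\binom{3k}{k}}\frac{\binom km}{(-4)^{k+1}} = (-1)^m\sum_{k=1}^3\sum_{j=0}^m a_j(\gamma_k)\bigl(C_j(\gamma_k)+(-1)^jC_j(1-\gamma_k)\bigr).
\end{equation*}

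The final step is purely a sign adjustment: since $(-4)^{k+1}=(-1)^{k+1}4^{k+1}$, the left-hand summand equals $(-1)^{k+1}\binom km/4^{k+1}$, so multiplying both sides by $-1$ replaces the left-hand prefactor by $(-1)^k\binom km/4^{k+1}$ and turns $(-1)^m$ into $(-1)^{m+1}$ on the right, giving~\eqref{eq.hwg40y4} precisely. I expect no genuine obstacle here: the statement is a one-line specialization whose proof reads ``Theorem~\ref{thm.uok2s8e} with $z=-4$,'' mirroring Corollary~\ref{cor.a55j4z8}. The only point requiring care is tracking the alternating factor through $1/z^{k+1}=(-1)^{k+1}/4^{k+1}$ so that the parity $(-1)^{m+1}$ in~\eqref{eq.hwg40y4} emerges correctly; everything else is inherited verbatim from Theorem~\ref{thm.uok2s8e} and Corollary~\ref{cor.p7i3dyq}.
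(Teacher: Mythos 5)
Your proposal is correct and is essentially the paper's own proof, which consists of the single line ``Theorem~\ref{thm.uok2s8e} with $z=-4$''; your explicit tracking of the sign via $1/(-4)^{k+1}=(-1)^{k+1}/4^{k+1}$, which converts the theorem's $(-1)^m$ into the corollary's $(-1)^{m+1}$, is exactly the bookkeeping the paper leaves implicit. The additional observations (that the $\gamma_k$ are the roots of $x(1-x)^2+4=0$, that the coefficients $a_r(\gamma_k)$ coincide with those of Corollary~\ref{cor.p7i3dyq}, and that no $\gamma_k$ or $1-\gamma_k$ lies in $[0,1]$) are sound and only make the specialization more careful than the original.
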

\begin{proof}
Theorem~\ref{thm.uok2s8e} with $z=-4$.
\end{proof}
Here we present a couple of examples from~\eqref{eq.hwg40y4}.
\begin{equation}
\begin{split}
\sum_{k = 0}^\infty  {\frac{{H_{2k}  - H_k }}{{(3k + 1)\binom{{3k}}{k}}}} \frac{{( - 1)^k k}}{{4^{k + 1} }} &= \frac{3}{{448}}\ln 2 - \frac{9}{{512}}\ln ^2 2 - \frac{3}{{128}}\arctan ^2 \left( {\frac{{\sqrt 7 }}{5}} \right)\\
&\qquad - \left( {\frac{1}{{224}} - \frac{{89}}{{6272}}\ln 2} \right)\sqrt 7 \arctan \left( {\frac{{\sqrt 7 }}{5}} \right),
\end{split}
\end{equation}
\begin{equation}
\begin{split}
\sum_{k = 0}^\infty  {\frac{{H_{2k}  - H_k }}{{(3k + 1)\binom{{3k}}{k}}}} \frac{{( - 1)^k k(k - 1)}}{{4^{k + 1} }} &=  - \frac{{219}}{{25088}}\ln 2 + \frac{{93}}{{4096}}\ln ^2 2 + \frac{{31}}{{1024}}\arctan ^2 \left( {\frac{{\sqrt 7 }}{5}} \right)\\
&\qquad + \left( {\frac{1}{{256}} - \frac{{6651}}{{{\rm 351232}}}\ln 2} \right)\sqrt 7 \arctan \left( {\frac{{\sqrt 7 }}{5}} \right).
\end{split}
\end{equation}
\bigskip

As counterpart of~\eqref{eq.kjlniph}, we have
\begin{equation}\label{eq.pwms8oq}
\sum_{k=0}^\infty \frac{H_{2k} - H_{k}}{(3k+1) \binom{3k}{k}} \frac {\binom{k + m}k}{z^{k + m + 1}} =(-1)^m\int_0^1 {\frac{{\ln \left(\frac{x}{1 - x}\right)}}{\left(x(1 - x)^2  - z\right)^{m + 1}}dx},\quad |z|\ge 1. 
\end{equation}
Note that~\eqref{eq.pwms8oq} holds for every real number $m$ that is not a negative integer.

Setting $z=2$ in~\eqref{eq.pwms8oq} gives
\begin{equation}\label{eq.lyb7lws}
\sum_{k=0}^\infty \frac{H_{2k} - H_{k}}{(3k+1) \binom{3k}{k}} \frac {\binom{k + m}k}{2^{k + m + 1}} =(-1)^m\int_0^1 {\frac{{\ln \left(\frac{x}{1 - x}\right)}}{((x - 2)(x^2  + 1))^{m + 1}}\,dx}, 
\end{equation}
while setting $z=-4$ gives
\begin{equation}\label{eq.qikg7w3}
\sum_{k=0}^\infty \frac{H_{2k} - H_{k}}{(3k+1) \binom{3k}{k}} \frac {(-1)^k\binom{k + m}k}{4^{k + m + 1}} =-\int_0^1 {\frac{{\ln \left(\frac{x}{1 - x}\right)}}{((x + 1)(x^2  -3x + 4))^{m + 1}}\,dx}. 
\end{equation}
\begin{proposition}
We have
\begin{equation}\label{eq.xuvoy6t}
\sum_{k=0}^\infty \frac{H_{2k} - H_{k}}{(3k+1) \binom{3k}{k}2^{k + 1}} = \frac{\pi \ln(2)}{20} - \frac{3}{{40}}\ln^2(2) - \frac{\pi^2}{160}.
\end{equation}
\end{proposition}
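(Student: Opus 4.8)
The plan is to read off the case $m=0$ of the integral representation~\eqref{eq.lyb7lws}, which states
\[
\sum_{k=0}^\infty \frac{H_{2k} - H_{k}}{(3k+1)\binom{3k}{k}2^{k+1}} = \int_0^1 \frac{\ln\!\left(\frac{x}{1-x}\right)}{(x-2)(x^2+1)}\,dx,
\]
and then to evaluate the integral on the right by precisely the partial fraction decomposition used in the first proof of Proposition~\ref{prop.nxjp5ek},
\[
\frac{1}{(x-2)(x^2+1)} = \frac{1}{4i-2}\,\frac{1}{x+i} - \frac{1}{4i+2}\,\frac{1}{x-i} + \frac15\,\frac{1}{x-2}.
\]
Splitting the integral over these three simple poles reduces the whole computation to three instances of $\int_0^1 \ln\!\left(\frac{x}{1-x}\right)/(x-\lambda)\,dx$.

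The key input is the second identity of Lemma~1, equation~\eqref{eq.yqwhh2z}, giving
\[
\int_0^1 \frac{\ln\!\left(\frac{x}{1-x}\right)}{x-\lambda}\,dx = -\frac12\ln^2\!\left(\frac{\lambda-1}{\lambda}\right),\qquad \lambda\notin[0,1).
\]
I would apply this with $\lambda=2,-i,i$ in turn. The arguments of the logarithm are $\frac{\lambda-1}{\lambda}=\frac12$ for $\lambda=2$, and $\frac{-i-1}{-i}=1-i$, $\frac{i-1}{i}=1+i$ for $\lambda=\mp i$. Hence the real pole $\lambda=2$ contributes $\frac15\cdot\bigl(-\frac12\ln^2 2\bigr)=-\frac{\ln^2 2}{10}$, while the two imaginary poles produce a complex-conjugate pair of terms whose sum equals twice the real part of either one.

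To finish I would use $\ln(1\mp i)=\tfrac12\ln 2\mp i\pi/4$, so that $\ln^2(1-i)=\tfrac14\ln^2 2-\tfrac{\pi^2}{16}-i\tfrac{\pi\ln 2}{4}$, and pair it with the coefficient $\tfrac{1}{4i-2}=\tfrac{-1-2i}{10}$. Forming $2\Re\bigl[\tfrac{-1-2i}{10}\cdot(-\tfrac12)\ln^2(1-i)\bigr]$ and adding the contribution $-\frac{\ln^2 2}{10}$ of the real pole collapses, after collecting the real and imaginary parts, to $\frac{\pi\ln 2}{20}-\frac{3}{40}\ln^2 2-\frac{\pi^2}{160}$, which is~\eqref{eq.xuvoy6t}.

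The only real obstacle is the complex bookkeeping. One must justify that the principal-branch identity~\eqref{eq.yqwhh2z}, proved for real $\lambda\notin[0,1)$, persists at the nodes $\lambda=\pm i$; this follows by analytic continuation, since both sides are holomorphic in $\lambda$ on $\mathbb{C}\setminus[0,1]$. After that, care is needed only in selecting the principal branch of $\ln(1\pm i)$ and squaring it correctly before taking real parts. It is worth noting that, unlike the companion evaluation~\eqref{eq.grfgv7c} for $H_{3k+1}-H_k$, no dilogarithm or Catalan-constant term survives here: the combined integrand $\ln\!\left(\frac{x}{1-x}\right)/(x-\lambda)$ integrates to an elementary $\ln^2$, so the Clausen contributions never enter.
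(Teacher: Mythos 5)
Your proposal is correct and follows essentially the same route as the paper: the paper likewise evaluates $\int_0^1 \ln\bigl(\tfrac{x}{1-x}\bigr)/\bigl((x-2)(x^2+1)\bigr)\,dx$ via the partial fraction decomposition from Proposition~\ref{prop.nxjp5ek} and identity~\eqref{eq.yqwhh2z} at $\lambda=2,\,-i,\,i$, arriving at the same $\ln^2(1-i)$, $\ln^2(1+i)$, $\ln^2 2$ terms. Your explicit conjugate-pairing and branch bookkeeping simply spells out the simplification the paper leaves implicit.
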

\begin{proof}
Using~\eqref{eq.f2814qz} and~\eqref{eq.yqwhh2z}, we have
\begin{equation*}
\begin{split}
\int_0^1 {\frac{{\ln \left(\frac{x}{1 - x}\right)}}{{(x - 2)(x^2  + 1)}}\,dx}  &= \frac{1}{{4i - 2}}\int_0^1 {\frac{{\ln \left(\frac{x}{1 - x}\right)}}{{x + i}}\,dx}  - \frac{1}{{4i + 2}}\int_0^1 {\frac{{\ln \left(\frac{x}{1 - x}\right)}}{{x - i}}\,dx} + \frac{1}{5}\int_0^1 {\frac{{\ln \left(\frac{x}{1 - x}\right)}}{{x - 2}}\,dx}\\
& = \frac{1}{{4i - 2}}\left( { - \frac{1}{2}\ln ^2 \left( {\frac{{ - i - 1}}{{ - i}}} \right)} \right) - \frac{1}{{4i + 2}}\left( { - \frac{1}{2}\ln ^2 \left( {\frac{{i - 1}}{i}} \right)} \right)\\
&\qquad + \frac{1}{5}\left( { - \frac{1}{2}\ln ^2 \left( {\frac{{2 - 1}}{2}} \right)} \right)\\
& = \frac{1}{{4i - 2}}\left( { - \frac{1}{2}\ln ^2 (1 - i)} \right) - \frac{1}{{4i + 2}}\left( { - \frac{1}{2}\ln ^2 (1 + i)} \right) + \frac{1}{5}\left( { - \frac{1}{2}\ln ^2 2} \right),
\end{split}
\end{equation*}
which simplifies to~\eqref{eq.xuvoy6t}.
\end{proof}

Setting $m=0$ in~\eqref{eq.qikg7w3} gives
\begin{equation}\label{eq.w2c6ek5}
\sum_{k = 0}^\infty  {\frac{{H_{2k}  - H_k }}{{(3k + 1)\binom{{3k}}{k}}}} \frac{{( - 1)^k }}{{4^{k + 1} }} =  - \int_0^1 {\frac{{\ln (x/(1 - x))dx}}{{(x + 1)(x^2  - 3x + 4)}}} .
\end{equation}
The integral occuring on the RHS can be evaluated. The result is stated in Proposition~\ref{prop.vxrxd2h}.
\begin{proposition}\label{prop.vxrxd2h}
We have
\begin{equation}\label{eq.qwvbjrj}
\begin{split}
\sum_{k = 0}^\infty  {\frac{{H_{2k}  - H_k }}{{(3k + 1)\binom{{3k}}{k}}}}\, \frac{{( - 1)^k }}{{4^{k + 1} }}& = \frac{3}{{64}}\ln ^2 2 + \frac{1}{{16}}\arctan ^2 \left( {\frac{{\sqrt 7 }}{5}} \right)\\
&\qquad- \frac{{5\sqrt 7 }}{{112}}\ln 2\,\arctan \left( {\frac{{\sqrt 7 }}{5}} \right).
\end{split}
\end{equation}
\end{proposition}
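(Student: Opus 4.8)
The plan is to start from the integral representation~\eqref{eq.w2c6ek5}, which already writes the target sum as $-\int_0^1 \ln(x/(1-x))\,dx/((x+1)(x^2-3x+4))$. Since the quadratic factor $x^2-3x+4$ has the conjugate roots $(3\pm i\sqrt7)/2$, the first step is to invoke the partial fraction decomposition of $1/((x+1)(x^2-3x+4))$ already recorded in the proof of Proposition~\ref{prop.vhpcqxx}, namely the splitting into $\tfrac18\,\frac{1}{x+1}$ minus the two conjugate simple-pole terms with residue numerators $(7\pm 5i\sqrt7)/112$. This reduces the problem to three integrals of the form $\int_0^1 \ln(x/(1-x))\,dx/(x-\lambda)$, each supplied in closed form by~\eqref{eq.yqwhh2z} as $-\tfrac12\ln^2\!\big((\lambda-1)/\lambda\big)$; the hypothesis $\lambda\notin[0,1)$ is satisfied by all three poles $-1$ and $(3\pm i\sqrt7)/2$.

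Next I would evaluate the three logarithmic arguments $(\lambda-1)/\lambda$. For $\lambda=-1$ this equals $2$, contributing a clean $-\tfrac1{16}\ln^2 2$. For the conjugate pair $\lambda=(3\pm i\sqrt7)/2$ a short rationalization of the denominator gives $(\lambda-1)/\lambda=(5\pm i\sqrt7)/8$. The crucial observation is that $|5+i\sqrt7|^2=32$, so $|(5+i\sqrt7)/8|=1/\sqrt2$, whence
$$\ln\!\left(\frac{5+i\sqrt7}{8}\right)=-\tfrac12\ln 2 + i\arctan\!\left(\frac{\sqrt7}{5}\right),$$
with the conjugate value for $(5-i\sqrt7)/8$. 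Squaring these logarithms yields the real and imaginary parts purely in terms of $\ln^2 2$, $\arctan^2(\sqrt7/5)$, and the cross term $\ln 2\,\arctan(\sqrt7/5)$.

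Finally I would assemble the three pieces. Because the two quadratic-root contributions are complex conjugates multiplied by conjugate coefficients $(7\pm 5i\sqrt7)/112$, their sum is twice a real part, which I would expand using the identity $\Re(fg)=\Re f\,\Re g-\Im f\,\Im g$ employed earlier in the proof of Proposition~\ref{prop.vhpcqxx}. Collecting the $\ln^2 2$ terms (the $-\tfrac1{16}$ from the $\lambda=-1$ pole combining with $+\tfrac1{64}$ from the conjugate pair to give $-\tfrac3{64}$), the $\arctan^2$ term, and the cross term, and then negating to undo the sign in~\eqref{eq.w2c6ek5}, produces~\eqref{eq.qwvbjrj}. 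I expect the main obstacle to be bookkeeping rather than anything conceptual: one must track the principal branch of the complex logarithm (legitimate here since both arguments sit in the first quadrant with modulus $1/\sqrt2<1$, well off the cut) and carry the factors $(7\pm 5i\sqrt7)/112$ through the real-part computation without sign errors.
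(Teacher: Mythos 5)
Your proposal is correct and follows essentially the same route as the paper's own proof: both start from the representation \eqref{eq.w2c6ek5}, reuse the partial fraction decomposition from Proposition~\ref{prop.vhpcqxx}, evaluate each of the three integrals via \eqref{eq.yqwhh2z}, and collapse the conjugate pair into a real part using $\Re(fg)=\Re f\,\Re g-\Im f\,\Im g$. The only cosmetic difference is that you rationalize the logarithm argument to $(5\pm i\sqrt7)/8$ at the outset, whereas the paper carries it as $(1\pm i\sqrt7)/(3\pm i\sqrt7)$ and performs the same evaluation at the final simplification step.
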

\begin{proof}
We wish to evaluate the integral in~\eqref{eq.w2c6ek5}. Proceeding as in Proposition~\ref{prop.vhpcqxx}, we have
\begin{equation*}
\begin{split}
\int_0^1 {\frac{{\ln (x/(1 - x))\,dx}}{{(x + 1)(x^2  - 3x + 4)}}}  &= \frac{1}{8}\int_0^1 {\frac{{\ln (x/(1 - x))\,dx}}{{x + 1}}}  - \frac{{7 + 5i\sqrt 7 }}{{112}}\int_0^1 {\frac{{\ln (x/(1 - x))\,dx}}{{x - (3 + i\sqrt 7 )/2}}}\\ 
&\qquad - \frac{{7 - 5i\sqrt 7 }}{{112}}\int_0^1 {\frac{{\ln (x/(1 - x))\,dx}}{{x - (3 - i\sqrt 7 )/2}}},
\end{split}
\end{equation*}
so that upon using~\eqref{eq.yqwhh2z} we obtain
\begin{equation*}
\begin{split}
\int_0^1 {\frac{{\ln (x/(1 - x))\,dx}}{{(x + 1)(x^2  - 3x + 4)}}} & =  - \frac{1}{{16}}\ln ^2 2 + \frac{{7 + 5i\sqrt 7 }}{{224}}\ln ^2 \left( {\frac{{1 + i\sqrt 7 }}{{3 + i\sqrt 7 }}} \right)\\
&\qquad + \frac{{7 - 5i\sqrt 7 }}{{224}}\ln ^2 \left( {\frac{{1 - i\sqrt 7 }}{{3 - i\sqrt 7 }}} \right)\\
&= - \frac{1}{{16}}\ln ^2 2 + \frac{1}{{112}}\Re\left( {(7 + 5i\sqrt 7 )\ln ^2 \left( {\frac{{1 + i\sqrt 7 }}{{3 + i\sqrt 7 }}} \right)} \right);
\end{split}
\end{equation*}
which simplifies to the RHS of~\eqref{eq.qwvbjrj} upon using $\Re (fg)=\Re f\Re g - \Im f\Im g$.
\end{proof}
\begin{theorem}\label{thm.kn7npyf}
Let $m$ be a non-negative integer and let $z_1$, $z_2$ and $z_3$ be the distinct roots of $x(1 - x)^2-z=0$ where $z$ is a real number such that $|z|\ge1$. Then
\begin{equation}
\sum_{k=0}^\infty \frac{H_{2k} - H_{k}}{(3k+1) \binom{3k}{k}} \frac {\binom{k + m}k}{z^{k + m + 1}} =(-1)^m\sum_{k=1}^3{\sum_{j=0}^m{b_j{(z_k)}\left(C_j(z_k) + (-1)^jC_j(1 - z_k)\right)}},
\end{equation}
where $b_r{(z_1)}$, $b_r{(z_2)}$, $b_r{(z_3)}$ are as defined in Theorem~\ref{thm.mtaaocu} and $C_0$ and $C_r$ can be found from~\eqref{eq.w9cngvk} and~\eqref{eq.y3tw4qr} in Theorem~\ref{thm.oxop1ki}.
\end{theorem}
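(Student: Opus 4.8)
The plan is to combine the partial-fraction machinery of Theorem~\ref{thm.mtaaocu} with the logarithmic-integral identity established inside the proof of Theorem~\ref{thm.uok2s8e}; no genuinely new analytic input is needed. The natural starting point is the integral representation~\eqref{eq.pwms8oq}, which already writes the target series as $(-1)^m$ times $\int_0^1 \ln(x/(1-x))\,(x(1-x)^2-z)^{-(m+1)}\,dx$.

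First I would insert into this integral the partial-fraction decomposition of $(x(1-x)^2-z)^{-(m+1)}$ over the three clusters of poles at $z_1,z_2,z_3$, exactly as in the proof of Theorem~\ref{thm.mtaaocu}, whose coefficients are the $b_r(z_k)$ defined by~\eqref{eq.tp25jkt}. Since this is a finite algebraic identity, substituting it under the integral sign and interchanging the finite double sum with the integral is immediate, yielding
\[
\int_0^1 \frac{\ln(x/(1-x))}{(x(1-x)^2-z)^{m+1}}\,dx = \sum_{k=1}^3\sum_{j=0}^m b_j(z_k)\int_0^1 \frac{\ln(x/(1-x))}{(x-z_k)^{j+1}}\,dx.
\]

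Next I would evaluate each inner integral by reusing the identity proved in Theorem~\ref{thm.uok2s8e}, namely $\int_0^1 \ln(x/(1-x))\,(x-\lambda)^{-(r+1)}\,dx = C_r(\lambda)+(-1)^rC_r(1-\lambda)$. Concretely, I would split the logarithm as $\ln x - \ln(1-x)$, treat the first piece with~\eqref{eq.xhcawq8} (equivalently~\eqref{eq.w9cngvk} when $r=0$), and reduce the second piece via the substitution $u=1-x$, which converts it into the same type of integral with $\lambda$ replaced by $1-\lambda$ and, after accounting for the two sign changes from reversing the limits and from the factor $(x-\lambda)^{r+1}=(-1)^{r+1}(u-(1-\lambda))^{r+1}$, contributes the factor $(-1)^r$. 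Substituting this evaluation into the displayed equation and multiplying by $(-1)^m$ gives the asserted closed form.

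The only point demanding care is that the definition of $C_r$ bifurcates between $r=0$, where $C_0(\lambda)=\Li_2(1/\lambda)$ from~\eqref{eq.w9cngvk}, and $r\ge 1$, where~\eqref{eq.y3tw4qr} applies; I would therefore keep the $j=0$ term separate from the $j\ge 1$ terms while assembling the sum. This is bookkeeping rather than a real obstacle, so the chief content of the argument is simply recognizing that the two already-proved techniques compose, and the proof can be kept very short by pointing to the proofs of Theorems~\ref{thm.mtaaocu} and~\ref{thm.uok2s8e}.
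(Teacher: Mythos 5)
Your proposal is correct and follows exactly the route the paper takes: the paper's own (very terse) proof says to evaluate the integral on the right-hand side of~\eqref{eq.pwms8oq} via the partial-fraction decomposition with coefficients $b_j(z_k)$ from Theorem~\ref{thm.mtaaocu}, combined with the identity $\int_0^1 \ln(x/(1-x))\,(x-\lambda)^{-(j+1)}dx = C_j(\lambda)+(-1)^jC_j(1-\lambda)$ established in the proof of Theorem~\ref{thm.uok2s8e}, which is precisely your argument spelled out in more detail (your sign bookkeeping yielding the factor $(-1)^j$ is also correct, matching the paper's).
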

\begin{proof}
The proof is similar to that of Theorem~\ref{thm.mtaaocu}. We evaluate the integral on the RHS of~\eqref{eq.pwms8oq}.
\end{proof}
\begin{corollary}
Let $m$ be a non-negative integer and let $(\lambda_1,\lambda_2,\lambda_3)=(2,-i,i)$ where $i$ is the imaginary unit. Then
\begin{equation}\label{eq.acolcaj}
\sum_{k=0}^\infty \frac{H_{2k} - H_{k}}{(3k+1) \binom{3k}{k}} \frac {\binom{k + m}k}{2^{k + m + 1}} =(-1)^m\sum_{k=1}^3{\sum_{j=0}^m{b_j{(\lambda_k)}\left(C_j(\lambda_k) + (-1)^jC_j(1 - \lambda_k)\right)}},
\end{equation}
where $b_r{(2)}$, $b_r{(-i)}$, $b_r{(i)}$ are as defined in \eqref{eq.xaijx0y}--\eqref{eq.zx7vo7d} in Corollary~\ref{cor.ldjoyyj} and $C_r$ are found from~\eqref{eq.w9cngvk} and~\eqref{eq.y3tw4qr}.
\end{corollary}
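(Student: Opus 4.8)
The plan is to obtain \eqref{eq.acolcaj} as nothing more than the specialization $z=2$ of Theorem~\ref{thm.kn7npyf}, mirroring exactly how Corollary~\ref{cor.ldjoyyj} was deduced from Theorem~\ref{thm.mtaaocu}. First I would identify the three roots $z_1,z_2,z_3$ of $x(1-x)^2-2=0$. Expanding gives $x(1-x)^2-2=x^3-2x^2+x-2=(x-2)(x^2+1)$, so the distinct roots are precisely $2$, $-i$ and $i$; accordingly I set $(\lambda_1,\lambda_2,\lambda_3)=(2,-i,i)$.

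Next I would observe that the coefficients $b_r(\lambda_k)$ entering Theorem~\ref{thm.kn7npyf} are exactly those defined by \eqref{eq.tp25jkt} with $z=2$. Writing $x(1-x)^2-2=(x-2)(x-i)(x+i)$ and cancelling the factor $(x-\lambda_k)^{m+1}$ in \eqref{eq.tp25jkt} reproduces verbatim the explicit derivative formulas \eqref{eq.xaijx0y}--\eqref{eq.zx7vo7d} already recorded in Corollary~\ref{cor.ldjoyyj}. Hence no new computation of the $b_r(\lambda_k)$ is needed; they may be quoted directly. I would also check admissibility of the arguments feeding the functions $C_j$: formula \eqref{eq.w9cngvk} requires $\lambda\notin[0,1)$ and \eqref{eq.y3tw4qr} requires $\lambda\notin[0,1]$. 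Besides the $\lambda_k$ themselves, the shifted arguments $1-\lambda_k\in\{-1,\,1+i,\,1-i\}$ occur, none of which lies in $[0,1]$, so both $C_j(\lambda_k)$ and $C_j(1-\lambda_k)$ are well-defined.

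With these two points in hand, substituting $z=2$ into the conclusion of Theorem~\ref{thm.kn7npyf} yields \eqref{eq.acolcaj} immediately. There is no genuine obstacle here: the statement is a mechanical specialization, and the only real verifications are the factorization $x^3-2x^2+x-2=(x-2)(x^2+1)$ and the observation that the shifts $1-\lambda_k$ avoid the cut $[0,1]$ on which the $C_j$ are undefined.
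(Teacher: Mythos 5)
Your proposal is correct and matches the paper's proof, which is exactly the one-line specialization of Theorem~\ref{thm.kn7npyf} at $z=2$. The additional checks you supply (the factorization $x(1-x)^2-2=(x-2)(x^2+1)$, the agreement of the $b_r(\lambda_k)$ with \eqref{eq.xaijx0y}--\eqref{eq.zx7vo7d}, and the fact that $1-\lambda_k\notin[0,1]$) are exactly the routine verifications the paper leaves implicit.
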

\begin{proof}
Theorem~\ref{thm.kn7npyf} with $z=2$.
\end{proof}
At $m=1$ and $m=2$ in~\eqref{eq.acolcaj} we obtain
\begin{equation}
\sum_{k = 0}^\infty \frac{{H_{2k} - H_k }}{{(3k + 1)\binom{{3k}}{k}}}\frac{{k + 1}}{{2^{k + 2} }} = - \frac{\pi }{{400}} + \frac{{37}}{{2000}}\pi \ln 2 - \frac{{\pi^2 }}{{500}} + \frac{3}{{200}}\ln 2 - \frac{3}{{125}}\ln^2 2
\end{equation}
and
\begin{equation}
\begin{split}
&\sum_{k = 0}^\infty \frac{{H_{2k} - H_k }}{{(3k + 1)\binom{{3k}}{k}}}\frac{{\binom{{k + 2}}{k}}}{{2^{k + 3} }} \\
&\qquad = - \frac{{17\pi }}{{10000}} + \frac{{843}}{{100000}}\pi \ln 2 - \frac{{81\pi^2 }}{{100000}} + \frac{{249}}{{20000}}\ln 2 
- \frac{{243}}{{25000}}\ln^2 2.
\end{split}
\end{equation}

\begin{corollary}
Let $m$ be a non-negative integer and let \mbox{$(\gamma_1,\gamma_2,\gamma_3)=(-1,(3 + i\sqrt 7)/2,(3 - i\sqrt 7)/2)$} where $i$ is the imaginary unit. Then
\begin{equation}\label{eq.b6t3pyn}
\sum_{k=0}^\infty \frac{H_{2k} - H_{k}}{(3k+1) \binom{3k}{k}} \frac {(-1)^k\binom{k + m}k}{4^{k + m + 1}} =
-\sum_{k=1}^3{\sum_{j=0}^m{b_j{(\gamma_k)}\left(C_j(\gamma_k) + (-1)^j C_j(1 - \gamma_k)\right)}},
\end{equation}
where $b_r{(\gamma_1)}$, $b_r{(\gamma_2)}$, $b_r{(\gamma_3)}$ are as defined in \eqref{eq.pypuqrd}--\eqref{eq.xn84iwk}
in Corollary~\ref{cor.a55j4z8} and $C_r$ are found from~\eqref{eq.w9cngvk} and~\eqref{eq.y3tw4qr}.
\end{corollary}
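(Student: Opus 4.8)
The plan is to obtain this corollary as the special case $z=-4$ of Theorem~\ref{thm.kn7npyf}, exactly as Corollary~\ref{cor.a55j4z8} was deduced from Theorem~\ref{thm.mtaaocu}. First I would check that the hypotheses hold at $z=-4$: this value is real with $|z|=4\ge1$, and $x(1-x)^2-z=0$ becomes $x(1-x)^2+4=(x+1)(x^2-3x+4)=0$, whose three roots are $\gamma_1=-1$, $\gamma_2=(3+i\sqrt7)/2$ and $\gamma_3=(3-i\sqrt7)/2$. Since $\gamma_2,\gamma_3$ are non-real complex conjugates and $\gamma_1=-1$, the roots are distinct and none lies in $[0,1]$; moreover $1-\gamma_1=2$, $1-\gamma_2=(-1-i\sqrt7)/2$ and $1-\gamma_3=(-1+i\sqrt7)/2$ also avoid $[0,1]$, so each evaluation through~\eqref{eq.w9cngvk} and~\eqref{eq.y3tw4qr} of the quantities $C_j(\gamma_k)$ and $C_j(1-\gamma_k)$ is legitimate.

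Next I would match the coefficients. The functions $b_r(\lambda)$ are defined in~\eqref{eq.tp25jkt} through the denominator $x(1-x)^2-z$, and setting $z=-4$ turns this into $(x+1)(x^2-3x+4)$; therefore $b_r(\gamma_1)$, $b_r(\gamma_2)$, $b_r(\gamma_3)$ collapse precisely to the expressions~\eqref{eq.pypuqrd}--\eqref{eq.xn84iwk} already recorded in Corollary~\ref{cor.a55j4z8}, so no fresh computation of the $b_r$ is required.

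The single step that demands care is the bookkeeping of the powers of $-1$. On the left of Theorem~\ref{thm.kn7npyf} one has $z^{k+m+1}=(-4)^{k+m+1}=(-1)^{k+m+1}4^{k+m+1}$; pulling out $(-1)^{m+1}$ and using $(-1)^{-k}=(-1)^k$ rewrites that series as $(-1)^{m+1}$ times the series on the left of~\eqref{eq.b6t3pyn}. The theorem's right-hand side already carries the prefactor $(-1)^m$, so transferring the factor $(-1)^{m+1}$ across the equality and combining it with $(-1)^m$ yields $(-1)^{2m+1}=-1$, which is exactly the overall sign displayed in the corollary. This is the only real obstacle and it is purely routine; once the sign is reconciled, the asserted identity is read off directly from Theorem~\ref{thm.kn7npyf}.
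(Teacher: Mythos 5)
Your proposal is correct and follows exactly the paper's route: the paper's own proof is the one-line instruction ``Use Theorem~\ref{thm.kn7npyf} with $z=-4$,'' and your verification of the roots of $x(1-x)^2+4=(x+1)(x^2-3x+4)$, the matching of the coefficients $b_r(\gamma_k)$ with~\eqref{eq.pypuqrd}--\eqref{eq.xn84iwk}, and the sign bookkeeping $(-1)^{m+1}\cdot(-1)^m=-1$ simply make explicit what the paper leaves implicit.
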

\begin{proof}
Use Theorem~\ref{thm.kn7npyf} with $z=-4$.
\end{proof}

\section{Concluding remarks}

Theorems similar to those in the previous section can be stated for alternating sums. There does not appear to be values of $z$, however,  
for which the integrals can be evaluated in terms of elementary functions. For a slightly different direction of future research we mention that
replacing $z$ by $iz$ and comparing the real and imaginary parts we get integral relations of the form
\begin{equation}
\sum_{k=0}^\infty (-1)^k \frac{H_{6k+1} - H_{2k}}{(6k+1) \binom{6k}{2k}} z^k = - \int_0^1 \frac{\ln(x)}{1+z^2 x^2(1-x)^4} dx,
\end{equation}
\begin{equation}
\sum_{k=0}^\infty (-1)^k \frac{H_{6k+4} - H_{2k+1}}{(6k+4) \binom{6k+3}{2k+1}} z^k = - z \int_0^1 x(1-x)^2 \frac{\ln(x)}{1+z^2 x^2(1-x)^4} dx,
\end{equation}
\begin{equation}
\sum_{k=0}^\infty (-1)^k \frac{H_{4k} - H_{2k}}{(6k+1) \binom{6k}{2k}} z^k = 
- \int_0^1 \frac{\ln\left (\frac{x}{1-x}\right )}{1+z^2 x^2(1-x)^4} dx,
\end{equation}
and
\begin{equation}
\sum_{k=0}^\infty (-1)^k \frac{H_{4k+2} - H_{2k+1}}{(6k+4) \binom{6k+3}{2k+1}} z^k = 
- z \int_0^1 x(1-x)^2 \frac{\ln\left (\frac{x}{1-x}\right )}{1+z^2 x^2(1-x)^4} dx.
\end{equation}

In all cases one can attempt to evaluate the integrals appearing on the right hand sides.

\end{document}